%
%
%
%
\documentclass{amsart}
\usepackage{amssymb}
\usepackage{amsmath}
\usepackage{amsfonts}
\usepackage{graphicx}
\usepackage{pst-node}
\theoremstyle{plain}
\newtheorem{thm}{Theorem}[section]

\newtheorem{lmm}[thm]{Lemma}
\newtheorem{cor}[thm]{Corollary}

\newtheorem{rmk}[thm]{Remark}

\newtheorem{prb}[thm]{Problem}
\theoremstyle{remark}

\def\pmc#1{\setbox0=\hbox{#1}
    \kern-.1em\copy0\kern-\wd0
    \kern.1em\copy0\kern-\wd0}

\def\De{\Delta}

\def\vep{\varepsilon}

\def\si{\sigma}
\def\Si{\Sigma}

\def\op{\operatorname}
\def\ov{\overline}

\def\sm{\setminus}
\begin{document}

\title
{On Snake cones, alternating cones and related constructions}

\author[K. Eda]{Katsuya Eda}
\address{School of Science and Engineering,
Waseda University, Tokyo 169-8555, Japan}
\email{eda@logic.info.waseda.ac.jp}

\author[U.H. Karimov]{Umed H. Karimov}
\address{Institute of Mathematics,
Academy of Sciences of Tajikistan, Ul. Ainy $299^A$, Dushanbe
734063, Tajikistan} \email{umedkarimov@gmail.com}

\author[D. Repov\v s]{Du\v san Repov\v s }
\address{Faculty of Education, and
Faculty of Mathematics and Physics, University of Ljublja -na, P.O.Box 2964,
Ljubljana 1001, Slovenia} \email{dusan.repovs@guest.arnes.si}

\author[A. Zastrow]{Andreas Zastrow}
\address{Institute of Mathematics, Gdansk University, ul. Wita Stwosza 57,
80-952 Gda\'nsk, Poland}
\email{zastrow@mat.ug.edu.pl}

\subjclass[2010]{Primary 54F15, 55N10; Secondary 54G20, 55Q52}
\keywords{Noncontractible compactum, weak homotopy equivalence,
trivial shape, Peano continuum, Topologist sine curve, Snake on a square, 
Collapsed snake cone, Collapsed alternating cone, Asphericity, 
Hawaiian earring, Hawaiian tori}
\begin{abstract}
We show that the Snake  on a square $SC(S^1)$ is homotopy equivalent to the space
 $AC(S^1)$ which was investigated in the previous work by Eda, Karimov
 and Repov\v s. We also introduce related constructions $CSC( - )$ and 
$CAC( - )$ and investigate homotopical differences between these four 
 constructions. Finally, we explicitly describe the second
 homology group of the Hawaiian tori wedge.  
\end{abstract}
\date{\today}
\maketitle
\date{\today}
\section{Introduction}
The functor $SC(-,-)$, mapping
from the category of all spaces with base points and continuous
mappings to the subcategory of simply connected spaces was constructed 
in \cite{EKR:topsin}. 
We named $SC(X,x)$
the {\it Snake cone} over a pointed space $(X,x)$. In the case
when the space $X$ is a circle $S^1$ with a base point $x$, the resulting space
$SC(S^1, x)$, called the {\it Snake on a square}, is a cell-like simply connected 
2-dimensional Peano
continuum \cite{EKR:topsin}.
It was shown in \cite{EKR:nonaspherical} that the space $SC(S^1,
x)$ is not only noncontractible but is also nonaspherical (because the second
homotopy group of this space is nontrivial, see also
\cite{EKR:secondhomotopy, EKR:snake}). 
We investigated another functor $AC(-,-)$ in
\cite{EKR:simple}, which shares many properties with $SC(-,-)$, and
we proved that
$AC(\mathbb{H},o)$ is not homotopy equivalent to $SC(\mathbb{H},o)$ for
the Hawaiian earring $\mathbb{H}$ with the distinguished point $o$. 
 We named $AC(X,x)$ the {\it Alternating cone} over a pointed space $(X,x)$. 
In the present paper we shall introduce some variants of these 
constructions, i.e. 
the {\it Collapsed snake cone} $CSC(X,x)$ and the {\it Collapsed alternating 
cone} $CAC(X,x)$, and we shall investigate homotopy equivalences among these 
four functors. 

Our main results are the following: 

\begin{thm}\label{thm:main1} 
If a space $X$ is semi-locally strongly contractible at $x_0\in X$, then
$SC(X,x_0)$ and $CSC(X,x_0)$ (resp. $AC(X,x_0)$ and $CAC(X,x_0)$) 
are also homotopy equivalent. 
\end{thm}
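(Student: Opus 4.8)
The plan is to exhibit the collapsing quotient as a homotopy equivalence. By construction, both $CSC(X,x_0)$ and $CAC(X,x_0)$ are obtained from $SC(X,x_0)$ and $AC(X,x_0)$, respectively, by collapsing to a point a distinguished subset $A$, namely the part of the construction lying over the base point together with the connecting arcs. Writing $q\colon SC(X,x_0)\to CSC(X,x_0)$ for the quotient map, the theorem reduces to showing that $q$ and its alternating analogue are homotopy equivalences. Since the only difference between the snake and the alternating constructions is the gluing pattern of consecutive pieces, which is irrelevant to the local analysis of the collapse, I would treat the snake case in detail and remark that the same argument applies verbatim to $AC$ versus $CAC$.

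First I would record the building-block structure: $SC(X,x_0)$ decomposes as a countable union of pieces $P_1,P_2,\dots$ accumulating onto the sub-arc (or point) corresponding to $x_0$, and on each finite piece $A\cap P_n$ is a contractible subset along which the collapse is a cofibration. Thus on any finite union $P_1\cup\cdots\cup P_N$ the restriction of $q$ is visibly a homotopy equivalence, with an explicit inverse and explicit homotopies. The entire content of the theorem is then to make these countably many local homotopy equivalences fit together into global ones that remain continuous at the accumulation set.

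This is exactly where semi-local strong contractibility enters. The hypothesis furnishes a neighborhood $U$ of $x_0$ and a contraction $H\colon U\times I\to X$ with $H(\,\cdot\,,0)=\operatorname{id}_U$, $H(\,\cdot\,,1)\equiv x_0$, and, crucially, $H(x_0,t)=x_0$ for all $t$. For all sufficiently large $n$ the piece $P_n$ is built over $U$, so $H$ induces compatible contractions of $A\cap P_n$ that (i) fix the base point throughout and (ii) have images whose diameters tend to $0$ as $n\to\infty$. Feeding these into the standard construction of a retraction $SC(X,x_0)\times I\to SC(X,x_0)\times\{0\}\cup A\times I$ shows that $(SC(X,x_0),A)$ has the homotopy extension property and that $A$ is contractible; collapsing a contractible subspace along which the inclusion is a cofibration is a homotopy equivalence, and the maps so produced are continuous at the accumulation set precisely because $H$ keeps $x_0$ fixed while the tail contractions shrink to $x_0$.

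The main obstacle is this continuity at the accumulation set, and the two clauses of the hypothesis are exactly what overcome it. The word \emph{strongly} (the contraction fixes $x_0$) prevents the tail homotopies from drifting away from the limit, so that the glued homotopy does not tear at $x_0$; the phrase \emph{semi-locally} (a single neighborhood $U$ with a single contraction serving all tail pieces at once) supplies the uniformity needed to bound the diameters of the tail homotopies simultaneously. Without either clause the countable family of local homotopy equivalences would fail to assemble into a continuous global one, which is the reason the naive collapse is not a homotopy equivalence for general $X$, as the authors' earlier examples illustrate.
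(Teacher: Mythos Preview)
Your plan has a genuine gap at the cofibration step. The collapse sends the square $\mathbb{I}^2$ onto the interval $L$ (not to a single point), so the relevant pair is $(SC(X,x_0),\mathbb{I}^2)$, and everything hinges on whether this inclusion has the homotopy extension property. Your justification rests on the assertion that ``for all sufficiently large $n$ the piece $P_n$ is built over $U$'', but this misreads the construction: every piece of the snake carries a \emph{full} copy of $X$, and the topologist's-sine-curve topology forces any open neighborhood of $L\subset\mathbb{I}^2$ in $SC(X,x_0)$ to contain \emph{entire} fibers $X\times\{P\}$ for all $P$ sufficiently close to $L$, not merely their $U$-parts. The contraction $H$ lives inside $X$ and moves only points of $U$; it tells you nothing about how to deform a point $(P,x)$ with $x\notin U$ into the square, yet such points lie in every neighborhood of $\mathbb{I}^2$. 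So ``feeding these into the standard construction of a retraction $SC(X,x_0)\times I\to SC(X,x_0)\times\{0\}\cup A\times I$'' does not go through as stated. Your remark that the $AC$/$CAC$ case is verbatim is also off: the paper singles that case out as needing extra care because the homotopy inverse there cannot be made level-preserving.

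The paper bypasses the cofibration question and writes down an explicit homotopy inverse $\tau$ to the quotient $\sigma$. Choosing nested neighborhoods $\overline{V_0}\subset V_1$, $\overline{V_1}\subset U$ of $x_0$ and Urysohn functions $F,G:X\to\mathbb{I}$, one sets $\tau(\sigma(P,x))=(F(x)\cdot p_1(P);\,p_2(P))\in\mathbb{I}^2$ for $x\in V_0$, leaves $\tau(\sigma(P,x))=(P,x)$ unchanged for $x\notin V_1$, and interpolates via $\tau(\sigma(P,x))=(P,H(x,G(x)))$ on the shell $V_1\setminus V_0$. The essential difference from your outline is that points with $X$-coordinate far from $x_0$ are simply \emph{left in place}: $\tau$ never attempts to push a whole fiber into the square, and this is exactly why only the semi-local contraction of $U$ suffices. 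The required homotopies $\tau\sigma\simeq\mathrm{id}$ and $\sigma\tau\simeq\mathrm{id}$ are then level-preserving, which makes continuity at $L$ automatic; for $AC$/$CAC$ a separate, non-level-preserving formula for $\tau$ is given.
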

\begin{thm}\label{thm:main2}
The Snake on a square $SC(S^1, x_0)$ and the Alternating cone $AC(S^1,x_0)$ are 
homotopy equivalent for every $x_0\in S^1$.
\end{thm}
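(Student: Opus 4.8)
The plan is to use Theorem \ref{thm:main1} to reduce the assertion to a comparison of the two \emph{collapsed} constructions, and then to exhibit an explicit homeomorphism between them. First I would check that $S^1$ is semi-locally strongly contractible at an arbitrary point $x_0$. Since $S^1$ is a compact $1$-manifold, the point $x_0$ possesses arbitrarily small open arc neighborhoods $U$, each of which is strongly contractible to $x_0$ by a homotopy that keeps $x_0$ fixed throughout and never leaves $S^1$ (slide the arc monotonically onto $x_0$). This is exactly the hypothesis required by Theorem \ref{thm:main1}, so the theorem yields homotopy equivalences
\[
 SC(S^1,x_0)\;\simeq\;CSC(S^1,x_0)\qquad\text{and}\qquad AC(S^1,x_0)\;\simeq\;CAC(S^1,x_0).
\]
Consequently it suffices to prove that $CSC(S^1,x_0)$ and $CAC(S^1,x_0)$ are homotopy equivalent, and in fact I expect them to be homeomorphic, which would give the chain $SC(S^1,x_0)\simeq CSC(S^1,x_0)\cong CAC(S^1,x_0)\simeq AC(S^1,x_0)$.

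Next I would compare the two collapsed spaces directly. Both are assembled from a null sequence of copies of $S^1$ accumulating at the apex of the construction, the only difference lying in the attaching data: the alternating construction glues every second copy by an attaching map precomposed with the orientation-reversing self-homeomorphism $r\colon S^1\to S^1$ fixing $x_0$, whereas the snake construction glues all copies in the same (coherent) way. Because $r$ is a homeomorphism of $S^1$ fixing the base point, I would define a map $\Phi\colon CAC(S^1,x_0)\to CSC(S^1,x_0)$ that is the identity on the even-indexed copies and equals $r$ on the odd-indexed copies. Since the collapse identifies precisely the contractible connecting pieces lying between consecutive copies, and $r$ fixes the common attaching point $x_0$, the map $\Phi$ respects all the identifications passed to in forming the quotients; it is a bijection whose inverse is built the same way.

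The main obstacle will be continuity at the apex. The copies of $S^1$ form a null sequence, so their diameters tend to $0$, and I must verify that applying $r$ to infinitely many of them does not destroy continuity in the limit. This is exactly where passing to the collapsed cones is essential: after collapsing the distinguishing connecting arcs, the two constructions differ only by the per-copy homeomorphisms $r$, each of which maps a single shrinking circle to itself, so $\Phi$ displaces any point by at most the diameter of the copy containing it. Hence $\Phi$ is continuous (and open) at the apex, while an elementary check copy-by-copy gives continuity elsewhere; the same argument applied to $\Phi^{-1}$ shows that $\Phi$ is a homeomorphism. Combined with the two equivalences obtained from Theorem \ref{thm:main1}, this completes the proof of Theorem \ref{thm:main2}.
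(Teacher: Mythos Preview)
Your first step---reducing to $CSC(S^1)$ versus $CAC(S^1)$ via Theorem~\ref{thm:main1}---is exactly what the paper does, and your verification that $S^1$ is semi-locally strongly contractible is fine.

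The second step, however, rests on a mistaken picture of the two collapsed spaces. Neither $CSC(S^1,x_0)$ nor $CAC(S^1,x_0)$ is a null sequence of circles accumulating at a single apex; in both cases the limit set is the whole interval $L=\{0\}\times\mathbb{I}$. More importantly, the pieces out of which the two spaces are built are of different homeomorphism types: in $CSC(S^1)$ each indexed copy contributes a cylinder $S^1\times\mathbb{I}$ (consecutive cylinders being identified along a boundary circle, alternately at height~$0$ and height~$1$), whereas in $CAC(S^1)$ each indexed copy contributes a cone $C(S^1)$, with the odd cones having their vertex at height~$0$ and the even ones at height~$1$. The ``alternation'' in $AC$ is this alternation of cone vertices, not a precomposition with an orientation-reversing reflection of $S^1$; your map $\Phi$ as described does not even have a well-defined target.

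In fact $CSC(S^1)$ and $CAC(S^1)$ are \emph{not} homeomorphic. In both spaces the interval $L$ is intrinsically determined as the set of points at which the space fails to be a $2$-manifold (with boundary). Removing $L$ from $CSC(S^1)$ leaves a single connected strip (the cylinders are chained end to end by the boundary identifications), while removing $L$ from $CAC(S^1)$ leaves a countable disjoint union of pieces (the cones $U_n$ and $V_n$ meet only along $L$). So no homeomorphism can exist, and the paper accordingly proves only a homotopy equivalence: it passes through an intermediate space $Y_1$ (obtained from $CSC(S^1)$ by opening the Hawaiian-earring cross-sections into half-circles over a base rectangle), and then builds explicit maps $\sigma:Y_1\to CAC(S^1)$ and $\tau:CAC(S^1)\to Y_1$, with $\sigma$ level-preserving and $\tau$ only $(1/n)$-level-preserving on the $n$-th pair of cones, and checks that the two composites are homotopic to the identities. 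That careful, asymmetric construction is the genuine content you are missing.
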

\begin{thm}\label{thm:main3}
For the Hawaiian earring $\mathbb{H}$ with the base point $o$ 
the following properties hold: 
\begin{itemize}
\item[(1)] $SC(\mathbb{H},o)$ and $AC(\mathbb{H},o)$ (resp. $SC(\mathbb{H},o)$ 
           and $CSC(\mathbb{H},o)$) are not homotopy equivalent; 
\item[(2)] $AC(\mathbb{H},o)$ and $CAC(\mathbb{H},o)$ are not homotopy
	   equivalent; but 
\item[(3)] $CSC(\mathbb{H},o)$ and $CAC(\mathbb{H},o)$ are homotopy
	   equivalent.
\end{itemize}
\end{thm}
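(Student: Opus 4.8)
The first assertion of (1), that $SC(\mathbb{H},o)$ and $AC(\mathbb{H},o)$ are not homotopy equivalent, is exactly the main result of \cite{EKR:simple}, so nothing new is required there. For the remaining non-equivalences and for the single equivalence, the plan is to separate the four spaces by computing one algebraic invariant, namely the second homology group (equivalently, by the Hurewicz theorem applied to these simply connected spaces, the second homotopy group), and to read off from that computation both the distinctions of (1) and (2) and the coincidence of (3).

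First I would reduce each of the four cones over $\mathbb{H}$ to a tractable model. Writing $\mathbb{H}=\bigcup_n C_n$ as the shrinking union of circles $C_n$ meeting only at $o$, each construction applied to $\mathbb{H}$ should be exhibited as a limit of the corresponding construction applied to the finite wedges $\bigvee_{k\le n}C_k$. Since every such finite wedge of circles is semi-locally strongly contractible at its wedge point, Theorem~\ref{thm:main1} identifies snake with collapsed-snake (resp.\ alternating with collapsed-alternating) at each finite stage; hence the homotopical difference between a non-collapsed cone over $\mathbb{H}$ and its collapse is forced to live entirely in the limit, i.e.\ in a derived term measuring the failure of $o$ to be semi-locally strongly contractible.

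Next I would carry out the homology computation, whose key input is the explicit description of the second homology of the Hawaiian tori wedge announced in the abstract. After the reductions above, the snake (resp.\ alternating) cone over $\mathbb{H}$ is governed homologically by a wedge of Hawaiian tori, and $H_2$ of such a wedge splits as a sum of an inverse-limit term $\varprojlim_n$ (of the $H_2$ of the finite-stage cones) and a derived term $\varprojlim^1_n$ (of their $H_1$). The collapse is designed to contract away the base cone that records the accumulation of loops at $o$, and so it annihilates precisely the $\varprojlim^1_n$ contribution. Consequently $H_2(CSC(\mathbb{H},o))$ and $H_2(CAC(\mathbb{H},o))$ both reduce to the same inverse-limit group, while $H_2(SC(\mathbb{H},o))$ and $H_2(AC(\mathbb{H},o))$ each retain a nonzero $\varprojlim^1_n$ summand absent from their collapses. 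This yields $SC\not\simeq CSC$ and $AC\not\simeq CAC$, proving the second half of (1) and all of (2), and it simultaneously shows that $CSC(\mathbb{H},o)$ and $CAC(\mathbb{H},o)$ have identical $H_2$.

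Finally, for (3) I would upgrade this agreement of invariants to an honest homotopy equivalence, since coincidence of $H_2$ cannot by itself force equivalence for these non-CW Peano continua. Because the collapse removes the only feature distinguishing the snake pattern from the alternating pattern, both $CSC(\mathbb{H},o)$ and $CAC(\mathbb{H},o)$ should be realized as collapsed cones over the \emph{same} inverse system of finite-wedge stages; a level-preserving comparison map between these models, compatible with the bonding maps and constant on the collapsed part, then gives the equivalence directly. The main obstacle I anticipate is the homology computation of the preceding paragraph: controlling the $\varprojlim^1_n$ term for the genuinely wild space $\mathbb{H}$—verifying that it is nonzero for both $SC$ and $AC$ yet annihilated by the collapse—requires careful bookkeeping of the infinitely many accumulating tori and is where the bulk of the work lies.
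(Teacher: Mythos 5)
There is a genuine gap in your strategy for the new non-equivalences (the second half of (1) and all of (2)). You propose to separate the spaces by computing $H_2$, splitting it as an inverse limit of the finite-stage groups plus a derived $\varprojlim^1$ term and arguing that the collapse annihilates exactly the latter. But these are wild Peano continua, not CW complexes or homotopy inverse limits: singular homology of an actual inverse limit of compacta admits no Milnor-type exact sequence, so the asserted decomposition of $H_2(SC(\mathbb{H},o))$ or $H_2(AC(\mathbb{H},o))$ into $\varprojlim\oplus\varprojlim^1$ of the finite wedge stages is unjustified, and no argument is offered that the collapse kills precisely a derived term (note that for $X=S^1$ the collapse changes nothing up to homotopy by Theorem~\ref{thm:main1}, even though $\pi_2(SC(S^1))\neq 0$, so ``the collapse removes the $\varprojlim^1$ part'' cannot be read off formally). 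It is moreover far from clear that $H_2$ distinguishes $AC(\mathbb{H},o)$ from $CAC(\mathbb{H},o)$ at all; the paper does not attempt this. Its actual invariants are different and more geometric: for $SC$ versus $CSC$ it shows that every \emph{embedding} of $\mathbb{H}$ into $CSC(\mathbb{H},o)$ is null-homotopic (local contractibility off the limit arc forces $f(o)$ onto $\{x^*\}\times\mathbb{I}$, after which all but finitely many circles land in a contractible half), whereas $SC(\mathbb{H},o)$ contains an essential embedded copy of $\mathbb{H}$; and for $AC$ versus $CAC$ it runs a contradiction argument through flat homotopies, essential intervals and a winding-number construction (Lemmas~\ref{lmm:flat}--\ref{lmm:critical}), exploiting that points of $CAC(\mathbb{H},o)$ off the arc $L$ have local disk neighborhoods. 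Your proposal contains no substitute for these steps, and the homological route as described would not compile into a proof.

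Your sketch of (3) is closer in spirit to the paper, which indeed realizes $CSC(\mathbb{H},o)$ and $CAC(\mathbb{H},o)$ as shrinking unions $\widetilde{\mathbb{I}\bigvee}_{n}CSC(X_n)$ and $\widetilde{\mathbb{I}\bigvee}_{n}CAC(X_n)$ of the circle-level spaces and glues the equivalences $CSC(S^1)\simeq CAC(S^1)$ from Theorem~\ref{thm:main2}. However, you assert that a level-preserving comparison map ``gives the equivalence directly,'' which skips the one point that carries the content: the homotopy inverse $\tau$ in the proof of Theorem~\ref{thm:main2} is \emph{not} level-preserving, and continuity of the glued maps and homotopies at the limit arc fails unless the $n$-th constituent is taken to be $(1/n)$-level-preserving uniformly in $n$. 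That uniform estimate, together with the resulting continuity check at the points $(0;y)$, is what you would still need to supply.
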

\begin{thm}\label{thm:main4} 
For the 2-dimensional torus $T$ with the base point 
$z_0\in T$, the spaces $SC(T,z_0)$ and
 $AC(T,z_0)$ are not homotopy equivalent and consequently, also the spaces  
 $CSC(T,z_0)$ and $CAC(T,z_0)$ are not homotopy equivalent. 
\end{thm}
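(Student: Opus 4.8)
The plan is to prove Theorem~\ref{thm:main4} by distinguishing $SC(T,z_0)$ from $AC(T,z_0)$ via an algebraic invariant, most naturally the second homology group $H_2(-)$, and then to transfer this distinction to the collapsed constructions. Since the torus $T$ is a genuine two-dimensional manifold rather than a one-dimensional space like $S^1$ or $\mathbb{H}$, the snake and alternating constructions over $T$ should see the class of the fundamental cycle $[T]\in H_2(T)$ in an essential way, and I expect the homological behavior of the two functors on this two-dimensional input to differ. The strategy is therefore: first compute (or at least compare) $H_2(SC(T,z_0))$ and $H_2(AC(T,z_0))$, exhibit a discrepancy, and conclude the two spaces are not homotopy equivalent because homology is a homotopy invariant.

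First I would recall the internal structure of the two functors. Both $SC(X,x_0)$ and $AC(X,x_0)$ are built by stacking shrinking copies of (mapping cylinders or cones on) $X$ along a "topologist's sine curve" type spine, the difference between them being the snake versus alternating gluing pattern investigated in \cite{EKR:topsin,EKR:simple}. For a one-dimensional input the relevant homology lives in degree $2$ and arises as an infinite product/sum governed by the inverse system of the telescope; the earlier papers (e.g. \cite{EKR:nonaspherical,EKR:secondhomotopy}) already identify $\pi_2$ and $H_2$ of $SC(S^1,x_0)$ as nontrivial and essentially given by a derived-limit or $\lim^1$ contribution. For $X=T$ one extra generator, the $2$-cell $[T]$, enters at each stage of the telescope, so I would set up the corresponding inverse system $\{H_2(\text{finite stage})\}$ and extract both the $\lim$ and the $\lim^1$ terms. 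The key is that the bonding maps in the snake pattern and in the alternating pattern act differently on the $[T]$-generators, producing non-isomorphic $H_2$.

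The main obstacle will be the explicit computation of $H_2$ for these infinite, non-CW, non-aspherical continua: ordinary Mayer--Vietoris and cellular methods do not apply directly, so I would instead use the fact (from \cite{EKR:topsin,EKR:simple}) that each space is an inverse/direct limit of manageable finite approximations and pass to homology through the Milnor exact sequence
\begin{equation*}
0\longrightarrow {\varprojlim}^1 H_{3}(X_n)\longrightarrow H_{2}(\varprojlim X_n)\longrightarrow \varprojlim H_{2}(X_n)\longrightarrow 0,
\end{equation*}
keeping careful track of how the snake versus alternating bonding maps act. Once a genuine algebraic difference between $H_2(SC(T,z_0))$ and $H_2(AC(T,z_0))$ is established, the first assertion of the theorem follows from homotopy invariance of homology, exactly paralleling the Hawaiian-earring argument of \cite{EKR:simple} but with the richer two-dimensional coefficient data coming from $[T]$.

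For the final clause, I would deduce the non-equivalence of $CSC(T,z_0)$ and $CAC(T,z_0)$ from that of the uncollapsed spaces. Here the torus $T$ is semi-locally strongly contractible at $z_0$ (being a manifold, it is locally contractible, and one checks the stronger "strong" contractibility needed in Theorem~\ref{thm:main1}), so Theorem~\ref{thm:main1} applies and yields homotopy equivalences $SC(T,z_0)\simeq CSC(T,z_0)$ and $AC(T,z_0)\simeq CAC(T,z_0)$. Therefore a hypothetical homotopy equivalence $CSC(T,z_0)\simeq CAC(T,z_0)$ would chain with these to give $SC(T,z_0)\simeq AC(T,z_0)$, contradicting the first part; this is precisely the "consequently" in the statement. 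The only point requiring care in this last step is verifying the semi-local strong contractibility hypothesis of Theorem~\ref{thm:main1} for $T$, which I expect to be routine given that $T$ is a smooth closed manifold.
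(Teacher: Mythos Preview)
Your reduction of the ``consequently'' clause to Theorem~\ref{thm:main1} is exactly right and matches the paper. The substantive gap is in the first part.

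The proposed computation via a Milnor $\varprojlim$/$\varprojlim^1$ sequence does not get off the ground: $SC(T)$ and $AC(T)$ are not (homotopy) inverse limits of their finite truncations in any way that feeds into such a sequence, and singular homology has no general Milnor sequence for inverse limits of spaces. So the machinery you invoke simply does not apply here. Moreover, even granting some computation, you never say what the actual discrepancy between $H_2(SC(T))$ and $H_2(AC(T))$ would be; the paper does \emph{not} distinguish the two spaces by the isomorphism type of $H_2$, and it is not clear that this would work.

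What the paper actually does is rather different, and the key idea is one you are missing. First, it shows that the fundamental class $[z]$ of a single torus fiber $p^{-1}(\{C_1\})$ is nonzero in $H_2(SC(T))$. This uses Mayer--Vietoris for the cover $p_2^{-1}([0,1))\cup p_2^{-1}((0,1])$: if $[z]=0$ one obtains a class $u\in H_2(p_2^{-1}((0,1)))$ whose projections to \emph{infinitely many} torus summands are nonzero. The crucial lemma (proved by a surface-surgery argument) is that for any singular $2$-cycle $w$ in a Hawaiian tori wedge, the number of tori on which $[w]$ projects nontrivially is bounded by the genus of the closed surface $S_w$ built from $w$; this gives the contradiction. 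Second, the homotopy-invariant used is not $H_2$ itself but the \emph{local} property ``some point has arbitrarily small neighborhoods containing a $2$-cycle that is nontrivial in the whole space.'' In $SC(T)$ (and hence $CSC(T)$) points on the limit segment $L$ witness this, by the nontriviality of $[z]$ just established. In $AC(T)$ (and $CAC(T)$), Lemma~\ref{lmm:contractible} shows $p_2^{-1}([0,1))$ and $p_2^{-1}((0,1])$ are contractible \emph{in} the ambient space, so every sufficiently small cycle is null-homologous and the property fails.

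In short: the right invariant is local, not global, and the hard step is the genus bound for cycles in the Hawaiian tori wedge, which your inverse-limit outline does not address.
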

Consequently, we get the following: 
\begin{cor}
For each pair of functors $SC(-,-)$, $AC(-,-)$, $CSC(-,-)$ and
 $CAC(-,-)$, there exists a space such that the resulting functorial 
spaces are not
 homotopy equivalent. 
\end{cor}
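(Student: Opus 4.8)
The plan is to reduce the corollary to Theorems \ref{thm:main3} and \ref{thm:main4} by a short bookkeeping argument. There are $\binom{4}{2}=6$ unordered pairs of functors to be separated, and since the corollary only asks that \emph{some} space witness each separation, the witnessing space may be chosen independently for each pair. First I would dispose of the pairs that the Hawaiian earring $\mathbb{H}$ already handles. Theorem \ref{thm:main3}(1) separates $\{SC,AC\}$ and $\{SC,CSC\}$ directly, and Theorem \ref{thm:main3}(2) separates $\{AC,CAC\}$; this settles three of the six pairs immediately on $\mathbb{H}$.

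The key observation is that the two remaining ``mixed'' pairs follow by transitivity of homotopy equivalence, using the non-equivalences above together with the single positive statement in Theorem \ref{thm:main3}(3). Writing $\simeq$ for homotopy equivalence, that part gives $CSC(\mathbb{H},o)\simeq CAC(\mathbb{H},o)$. Combining this with Theorem \ref{thm:main3}(1) yields $SC(\mathbb{H},o)\not\simeq CSC(\mathbb{H},o)\simeq CAC(\mathbb{H},o)$, hence $SC(\mathbb{H},o)\not\simeq CAC(\mathbb{H},o)$, which separates $\{SC,CAC\}$; and combining it with Theorem \ref{thm:main3}(2) gives $AC(\mathbb{H},o)\not\simeq CAC(\mathbb{H},o)\simeq CSC(\mathbb{H},o)$, hence $AC(\mathbb{H},o)\not\simeq CSC(\mathbb{H},o)$, which separates $\{AC,CSC\}$. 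At this stage every pair except $\{CSC,CAC\}$ has been separated using $\mathbb{H}$ alone.

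The one pair that $\mathbb{H}$ cannot settle is precisely $\{CSC,CAC\}$, exactly because Theorem \ref{thm:main3}(3) makes these two functors agree up to homotopy on $\mathbb{H}$. For this last pair I would invoke Theorem \ref{thm:main4}: on the $2$-dimensional torus $T$ one has $CSC(T,z_0)\not\simeq CAC(T,z_0)$, supplying the missing separation. Collecting all six cases completes the proof.

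The difficulty here is conceptual rather than computational: the work is entirely in Theorems \ref{thm:main3} and \ref{thm:main4}, and the only thing to notice is that the lone equivalence, Theorem \ref{thm:main3}(3), does not obstruct the corollary but is in fact what allows transitivity to cover the mixed pairs $\{SC,CAC\}$ and $\{AC,CSC\}$ at no extra cost, while simultaneously isolating $\{CSC,CAC\}$ as the unique pair for which a second witnessing space is genuinely needed.
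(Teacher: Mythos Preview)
Your argument is correct and is exactly the bookkeeping the paper has in mind: the corollary is stated without proof as an immediate consequence of Theorems~\ref{thm:main3} and~\ref{thm:main4}, and your case analysis (five pairs via $\mathbb{H}$ using transitivity through Theorem~\ref{thm:main3}(3), the sixth via $T$) is a valid way to unpack it. One could equally well use the torus for the four ``cross'' pairs via Theorem~\ref{thm:main1} combined with Theorem~\ref{thm:main4}, reserving $\mathbb{H}$ only for $\{SC,CSC\}$ and $\{AC,CAC\}$, but this is a matter of taste and not a different approach.
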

We shall define the {\it Hawaiian tori wedge} similarly to the Hawaiian 
earring by
replacing the circle by the torus. A precise definition 
and supporting notions will be given in the forthcoming sections. 
\begin{thm}\label{thm:main5} 
Let $\mathbb{H}_T$ be the Hawaiian tori wedge. Then the following 
properties hold: 
\begin{enumerate}
\item $\pi _1(\mathbb{H}_T)$ is isomorphic to the free $\si$-product of
      countable copies of the free abelian group of rank two; 
\item $\pi _2(\mathbb{H}_T)$ is trivial; and  
\item $H_2(\mathbb{H}_T)$ is isomorphic to the free abelian group on 
      countably many generators. The generators are associated with the 
      fundamental cycles of the tori. 
\end{enumerate}
\end{thm}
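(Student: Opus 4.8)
The plan is to regard $\mathbb{H}_T$ as the shrinking (Hawaiian-type) wedge of a null sequence of $2$-tori $T_n$, all joined at the common base point $o$, where $\pi_1(T_n,o)\cong\mathbb{Z}^2$ is free abelian of rank two. For (1) I would appeal to the structure theory of fundamental groups of such shrinking wedges: since each torus has arbitrarily small disc neighbourhoods of $o$ that are contractible in $T_n$, so that small loops are nullhomotopic, the hypotheses of Eda's free $\si$-product theorem are satisfied and $\pi_1(\mathbb{H}_T,o)$ is canonically the free $\si$-product ${\prod}^{\si}_{n}\pi_1(T_n,o)={\prod}^{\si}_{n}\mathbb{Z}^2$. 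The only point to check is this local condition at $o$, which is immediate for tori; write $G$ for the resulting group.

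For (2) I would prove asphericity in dimension two. Each composite $r_n\circ g$ of a map $g\colon S^2\to\mathbb{H}_T$ with the retraction $r_n\colon\mathbb{H}_T\to T_n$ is nullhomotopic, since $\widetilde{T_n}=\mathbb{R}^2$ is contractible and hence $\pi_2(T_n)=0$. The idea is to assemble these nullhomotopies---each supported on the open set $g^{-1}(T_n\setminus\{o\})$ and, for large $n$, having image inside the small torus $T_n$---into a single continuous nullhomotopy of $g$; the null-sequence (shrinking) condition is exactly what guarantees that the glued homotopy is continuous along $g^{-1}(o)$. Equivalently, and perhaps more cleanly, one exhibits the generalized universal covering of $\mathbb{H}_T$ as a tree-like union of planes (one copy of $\mathbb{R}^2$ over each torus, glued along the lifts of $o$), which is contractible; this makes $\mathbb{H}_T$ aspherical and in particular gives $\pi_2(\mathbb{H}_T)=0$. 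The technical care lies in arranging the based nullhomotopies to agree on $g^{-1}(o)$, respectively in controlling the wildness of the covering.

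For (3) I would feed $\pi_2=0$ into the natural exact sequence $\pi_2(X)\xrightarrow{h}H_2(X)\to H_2(\pi_1X)\to0$, valid for every path-connected $X$: one builds $K(\pi_1X,1)$ from $X$ by attaching cells of dimension $\ge 3$, and only the $3$-cells alter $H_2$, quotienting it by the Hurewicz image $\mathrm{im}(h)$. With $h=0$ this yields $H_2(\mathbb{H}_T)\cong H_2(G)$, the Schur multiplier of the $\si$-product $G={\prod}^{\si}_n\mathbb{Z}^2$. The finite truncations furnish retractions $G\to\mathbb{Z}^2\ast\cdots\ast\mathbb{Z}^2$ onto $n$-fold free products, for which $H_2=\bigoplus_{k\le n}H_2(\mathbb{Z}^2)=\bigoplus_{k\le n}\mathbb{Z}$; together with the single-factor retractions $r_n$ these show that the fundamental classes $\iota_{n*}[T_n]$, for $\iota_n\colon T_n\hookrightarrow\mathbb{H}_T$, span a free abelian subgroup on which the retractions act as coordinate projections, so the map $\bigoplus_n\mathbb{Z}\to H_2(\mathbb{H}_T)$ sending the $n$-th generator to $\iota_{n*}[T_n]$ is injective.

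The main obstacle is the reverse inclusion, i.e.\ surjectivity together with finiteness. The retractions only give an embedding $H_2(\mathbb{H}_T)\hookrightarrow\varprojlim_n\bigoplus_{k\le n}\mathbb{Z}=\prod_n\mathbb{Z}$, and one must show the image is exactly the direct sum $\bigoplus_n\mathbb{Z}$, not the full product. This is precisely the statement that singular $H_2$ is strictly smaller than the \v Cech group $\check H_2(\mathbb{H}_T)=\prod_n\mathbb{Z}$: although a single continuous $2$-chain may meet infinitely many tori, it can carry the fundamental class of only finitely many of them, because any planar (genus $0$) domain maps to a torus with degree $0$, so realizing $[T_n]$ with nonzero multiplicity requires genuine genus, of which a finite chain has only a finite supply. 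Turning this heuristic into a proof---equivalently, showing that the wild infinite-word phenomena that inflate $\pi_1$ and $\check H_2$ contribute nothing to singular $H_2$---is where Eda's structure theory of free $\si$-products (noncommutative slenderness and the resulting finiteness of higher homology) must be invoked, and is the single genuine difficulty in the theorem.
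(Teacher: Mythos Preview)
Your (1) and the sketch for (2) are essentially what the paper does; for (2) the paper makes your ``assemble the nullhomotopies'' idea precise by partially ordering the components $U$ of $f^{-1}(\mathbb{H}_T\setminus\{o\})$ by nesting of their enclosing disks $D_U$, embedding this order into the complementary intervals of a Cantor set in $\mathbb{I}$, and using those intervals as the time-windows in which each $f|_{\ov{D_U}}$ is contracted to $o$.

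The genuine gap is in (3). The Hopf reduction $H_2(\mathbb{H}_T)\cong H_2(G)$ is valid but buys nothing: there is no off-the-shelf computation of $H_2$ of a free $\si$-product of copies of $\mathbb{Z}^2$, and ``noncommutative slenderness'' concerns homomorphisms out of such groups, not their higher homology, so the appeal you make at the end does not close the argument. You also assert that the product of retractions embeds $H_2(\mathbb{H}_T)$ into $\prod_n\mathbb{Z}$; recall that the analogous map $H_1(\mathbb{H})\to\prod_n\mathbb{Z}$ for the ordinary Hawaiian earring has enormous kernel, so this injectivity requires proof, not analogy. What actually works is exactly the genus heuristic you state and then set aside. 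The paper proves it as a self-contained surface-theory lemma: for any singular $2$-cycle $z$, if $S_z$ is the closed oriented surface obtained by gluing the simplices of $z$ along their paired edges, then $|\{\,i: r_{i*}[z]\neq 0\,\}|\le g(S_z)$. The proof is by induction on $g(S_z)$: one locates an essential simple closed curve in $f_z^{-1}(\{o\})$ (showing that if none exists then $f_z$ is homotopic into a single $T_i$), cuts $S_z$ open along it, caps off, and recurses on the lower-genus pieces. This lemma gives the finiteness directly; a reprise of the ordering argument from (2) on each reduced piece then shows that every $2$-cycle is homologous to one lying in $\bigoplus_i H_2(T_i)$, yielding both containments at once without any detour through $H_2(G)$.
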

This contrasts with the known results concerning the 2-dimensional Hawaiian
earring $\mathbb{H}_2$. Namely, 
\begin{enumerate}
\item $\pi _1(\mathbb{H}_2)$ is trivial \cite[Theorem A.1]{E:free}; and 
\item $\pi _2(\mathbb{H}_2) \cong H_2(\mathbb{H}_2)$ is isomorphic to
      the direct product of countably many copies of $\mathbb{Z}$ 
\cite[Corollary 1.2]{EK:aloha}. 
\end{enumerate}
Throughout this paper $X$ stands for a path-connected compact 
Hausdorff space. 
Standard notions are undefined and we refer the reader to
\cite{Spanier:algtop}.

\section{The construction of the Snake cone $SC(X,x)$, 
the Alternating cone $AC(X,x)$ and their variations}
In this paper we shall apply our constructions only to compact spaces, and so
the definitions of topologies, 
which we shall use, 
may look to be different from the original
ones in \cite{EKR:topsin}, but they are in fact the same. 
The construction of the Snake cone is based on the piecewise-linear
Topologist sine
curve $\mathcal{T}$ which is homeomorphic to the usual Topologist sine curve.
To describe this space and for the next discussion we need to fix some
terminology. For any two points $A$ and $B$ in the plane
$\mathbb{R}^2$, we denote by $[A, B]$ the linear segment connecting
these points. The unit segment of the real line is denoted by
$\mathbb{I}.$ The point of the coordinate plane $\mathbb{R}^2$ with
coordinates $a$ and $b$ is denoted as $(a; b)$, 
particularly when we describe 
the points in $SC(X,x)$ and $AC(X,x)$.  
 Let $A = (0; 0), B
= (0; 1), A_n = (1/n; 0), B_n = (1/n; 1)$  be points and let $L =
[A, B],$ $L_{2n-1} = [A_n, B_n],$ $L_{2n} = [B_n, A_{n+1}]$ be the
segments in the plane $\mathbb{R}^2$ for $n \in \mathbb{N} = \{1,
2, 3, \dots \}.$
We also let $C_{2n-1} = (1/n; 1/2)$ and $C_{2n} = (1/(n+1) + 1/2n(n+1); 1/2)$
for $n\in \mathbb{N}$ and $C =(0;1/2)$, see Figure 1.  

The piecewise linear Topologist sine curve $\mathcal{T}$ is the subspace of 
$\mathbb{R}^2$ defined as the union of $L_n$ and $L.$

\smallskip
\begin{figure}
\begin{center}
\includegraphics[scale=0.85]{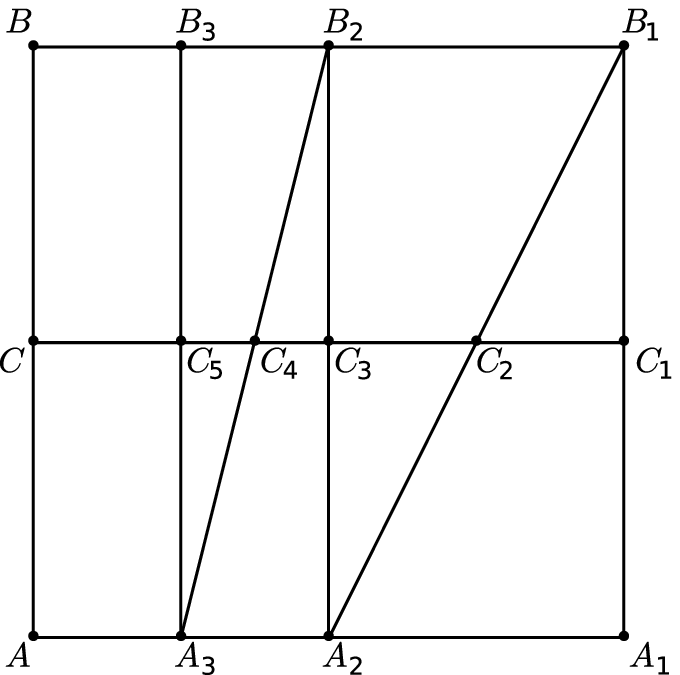} 
\smallskip

Figure 1: Topologist sine curve
\end{center}
\end{figure}
The Snake cone $SC(X,x)$ over a compact pointed space $(X,x)$ is 
the quotient space of the topological sum $(X\times \mathcal{T})
\bigsqcup \mathbb{I}^2$ via the identification of the points
$(x,t)\in X\times \mathcal{T}$ with $t\in \mathcal{T} \sm L \subset
\mathbb{I}^2$ and the identification of each set $X\times\{t\}$ 
with the point $t$, for every $t\in L$ (\cite{EKR:topsin}).

Define the following closed subspace of $X\times \mathbb{I}^2$  
\[
 Y = X\times \{ 0\}\times \mathbb{I}
\cup \bigcup _{n\in \mathbb{N}}X\times \{ 1/n\}\times \mathbb{I}
\cup \{ x\}\times \mathbb{I}\times \mathbb{I}.
\]

The Alternating cone $AC(X, x)$ over a compact pointed space $(X,x)$ is 
defined as 
the quotient space of $Y$ via the identification of each set $X \times 
\{ 0\}\times \{ y\}$ to $(0; y)$, $X\times \{1/(2n-1)\}\times \{ 0\}$ to 
$A_{2n-1} = (1/(2n-1); 0)$ 
and $X\times \{1/2n\}\times \{ 1\}$ to $B_{2n} =(1/2n; 1)$ for each
$y\in \mathbb{I}$ and each $n\in \mathbb{N}$, respectively.

In both cases of $SC(X,x)$ and $AC(X,x)$ 
let $p:SC(X, x) \to \mathbb{I}^2$ or $p:AC(X, x) \to \mathbb{I}^2$ be
the natural projection and define $p_1$ and $p_2$ by $p(u) = (p_1(u); 
p_2(u))$. 

The spaces $CSC(X,x)$ and $CAC(X,x)$ are obtained from spaces 
$SC(X,x)$ and $AC(X,x)$, respectively by identifying each 
point $(a; b)\in \mathbb{I}^2$ with $(0; b)$ for all $a,b\in \mathbb{I}$,
i.e. by collapsing $\mathbb{I}^2$ to $L$. We also denote this distinguished 
interval by $L$. 
The projection $p_2$ is defined on all the spaces $SC(X,x), AC(X,x), 
CSC(X,x)$, and $CAC(X,x)$, while $p_1$ is defined only on 
$SC(X,x)$ and $AC(X,x)$.

\smallskip
\begin{figure}
\begin{center}
\includegraphics[scale=0.85]{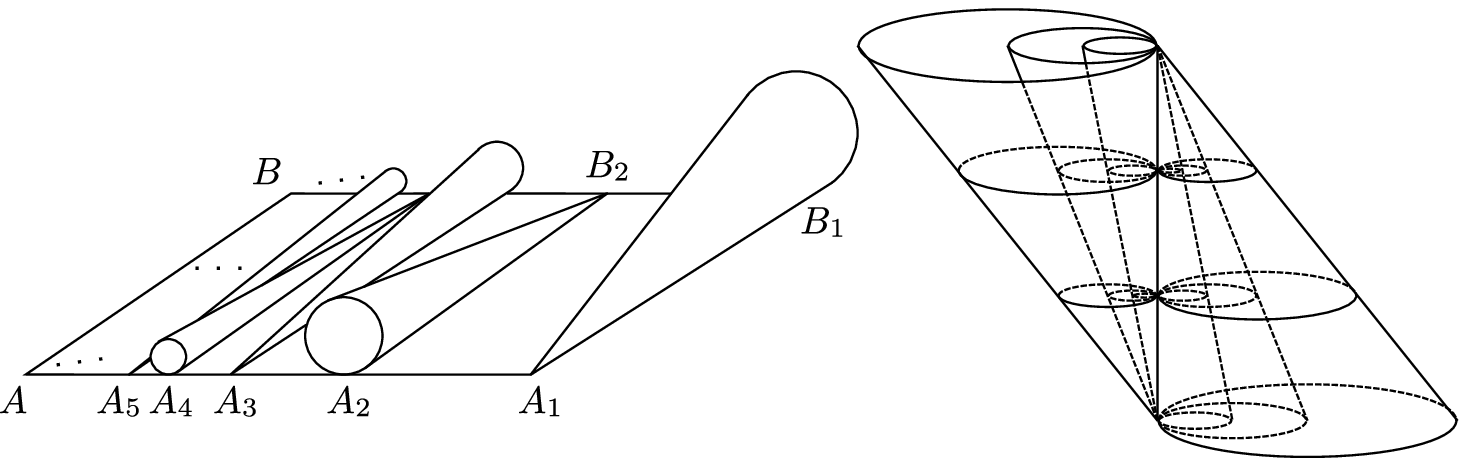} 
\smallskip

Figure 2: $AC(S^1) \mbox{ and } CAC(S^1)$ 
\end{center}
\end{figure}

For $m,n\in \mathbb{N}$ with $m\le n$, define $SC_{m,n}(X,x) =
p_1^{-1}([1/n,1/m])$ and $SC_{m}(X,x) = p_1^{-1}([0,1/m])$ 
when $p$ is mapping from $SC(X,x)$ to $\mathbb{I}^2$,  
and define 
$AC_{m,n}(X,x) = p_1^{-1}([1/n,1/m])$ and $AC_{m}(X,x)
= p_1^{-1}([0,1/m])$ when $p$ is mapping from $AC(X,x)$ to $\mathbb{I}^2$. 

For a subspace $S$ of these spaces and a map 
$f$ defined on $S$ whose range is one of these spaces, $f$ is called 
{\it flat}, if $p_2(u) = p_2(v)$ implies $p_2(f(u)) = p_2(f(v))$ for
$u,v\in S$. 
Similarly, a homotopy 
$H:S\times \mathbb{I}\to Z$ where $Z$ is one of these four spaces, is 
said to be flat, if the map $H(-,t)$ is flat for each $t\in \mathbb{I}$.  

For a subspace $S$ of these spaces and a map 
$f$ defined on $S$ whose range is one of these spaces, $f$ is called 
{\it level-preserving} (resp., {\it $\vep$-level-preserving}), if 
$p_2(f(u)) = p_2(u)$ (resp., $|p_2(f(u)) - p_2(u)| <\vep$) for every $u\in
S$. Similarly, a homotopy 
$H:S\times \mathbb{I}\to Z$ where $Z$ is one of these four spaces, is 
{\it level-preserving} (resp., {\it $\vep$-level-preserving}), if 
$p_2(H(u),t) = p_2(u)$ (resp., $|p_2(H(u,t)) - p_2(u)| <\vep$) for every $u\in
S$ and $t\in\mathbb{I}$. 

The {\it cone} over a space $X$, denoted by $C(X)$, is defined as the 
quotient space of
$X\times \mathbb{I}$ by identifying $X\times \{ 1\}$ to a point and so 
we can describe points on the cone by points on $X\times \mathbb{I}$. 

When arguments are not related to base points, we shall abbreviate 
$SC(X,x)$ by $SC(X)$, and so on. 

Finally, we introduce a construction of spaces for our further
investigation. 
For spaces $X_n$ $(n\in\mathbb{N})$ with $x_n\in X_n$, let 
$\widetilde{\bigvee}_{n\in\mathbb{N}}(X_n,x_n)$ be the space obtained by 
identifying all $x_n$'s to the point $x^*$ so that every neighborhood of $x^*$ 
contains almost all $X_n$'s and each subspace topology of $X_n$ coincides 
with the topology of $X_n$.     
When the index set is finite, we write $\bigvee_{n=1}^k(X_n,x_n)$ as
usual. 
When each $X_n$ is a copy of the circle,  
$\widetilde{\bigvee}_{n\in\mathbb{N}}(X_n,x_n)$ is homeomorphic to the
Hawaiian earring. When each $X_n$ is a copy of the 2-dimensional torus $T$,
we call $\widetilde{\bigvee}_{n\in\mathbb{N}}(X_n,x_n)$ the {\it Hawaiian
tori wedge} and denote it by $\mathbb{H}_T$. 
\begin{rmk}
{\rm The {\it Sombrero space} was introduced and studied in \cite{CMRZZ}. 
The {\it piecewise linear Sombrero space} is defined as the subspace of
$\mathbb{R}^3$ obtained by rotating $\mathcal{T}$ about its limiting
interval $L \subseteq \{ 0\}\times \{ 0\}\times \mathbb{R}$. The {\it Sliced 
Sombrero space} is then defined as the union of the Sombrero space and 
$\mathbb{I}\times \{ 0\} \times
\mathbb{I}$. In other words, the Sombrero space is the quotient space of the
product $S^1\times \mathcal{T}$ that results from identifying the circles 
$S^1\times \{ a\}$ for $a\in L\subset \mathcal{T}$ to one point, and the  
Sliced Sombrero space is the quotient space of the topological sum of
the Sombrero space and the unit square $\mathbb{I}^2$ that results from 
identifying the two topological sine-curves $\mathcal{T}$ which have been 
defined in each of the components of this topological sum. 

In this form one can see that the Sliced Sombrero and the Snake on a square
 $SC(S^1)$ are analogously built and thus are homeomorphic. 
Our starting point of the investigations in this
 paper was the discovery of this fact and the homotopy equivalence
 between the Sliced Sombrero and $AC(S^1)$, which is reflected in 
 Theorem~\ref{thm:main2}.  

While describing the proof of Theorem~\ref{thm:main2}, we found the 
constructions $CSC(X,x)$ and $CAC(X,x)$. Though differences among 
the constructions $SC(X,x)$, $AC(X,x)$, $CSC(X,x)$ and $CAC(X,x)$, except 
the difference between $CSC(X,x)$ and $CAC(X,x)$, were 
shown using the Hawaiian earring $\mathbb{H}$, the difference 
was left open. Similarly as for the circle 
$S^1$, $SC(S^n)$ and $AC(S^n)$ are homotopy equivalent for 
the $n$-sphere $S^n$, which was meanwhile shown by the 
first author. We found that $SC(T)$ and $AC(T)$ are not homotopy 
equivalent for the torus $T$ and by Theorem~\ref{thm:main1} 
$CSC(T)$ and $CAC(T)$ are not homotopy equivalent, either. 
Since a space which is homotopy equivalent to the Hawaiian tori wedge, appears 
in these spaces and it works in the proof, our interests turned 
to $H_2(\mathbb{H}_T)$. This explains how our results in this paper 
are related. }   

\end{rmk}
\section{Proof of Theorems~\ref{thm:main1} and \ref{thm:main2}}
First we give another 
presentation of the spaces $CSC(X,x)$ and $CAC(X,x)$. 
Let $X_n$ be a copy of $X$ and $x_n$ a copy of $x_0\in X$ for each $n$. 
Then $CSC(X,x)$ is homeomorphic to the quotient space of 
$\widetilde{\bigvee}_{n\in\mathbb{N}}(X_n,x_n)\times \mathbb{I}$,  
obtained by identifying 
$X_{2n}\times \{ 0\}$ with $X_{2n-1}\times \{ 0\}$, and by identifying 
$X_{2n+1}\times \{ 1\}$ with $X_{2n}\times \{ 1\}$ for $n\in\mathbb{N}$. 

To present $CAC(X,x)$ in another way, let 
$C(\widetilde{\bigvee}_{n\in\mathbb{N}}(X_{2n-1},x_{2n-1}))$ be the 
cone over $\widetilde{\bigvee}_{n\in\mathbb{N}}(X_{2n-1},x_{2n-1})$, 
where $x^*$ is the point of identification and 
$C(\widetilde{\bigvee}_{n\in\mathbb{N}}(X_{2n},x_{2n}))$ with $x^{**}$ 
analogously. We denote the interval connecting the vertex of the first cone and 
the base point $x^*$ by $\{ x^*\}\times \mathbb{I}$, where $(x^*,1)$ denotes the vertex and $(x^*,0)$ denotes the point $x^*$ in the base space 
$\widetilde{\bigvee}_{n\in\mathbb{N}}(X_{2n-1},x_{2n-1})$. 
We denote the corresponding interval by $\{ x^{**}\}\times \mathbb{I}$, 
analogously for the second cone. 
Then $CAC(X,x_0)$ is homeomorphic to the quotient space of the disjoint union 
$C(\widetilde{\bigvee}_{n\in\mathbb{N}}(X_{2n-1},x_{2n-1}))\sqcup 
C(\widetilde{\bigvee}_{n\in\mathbb{N}}(X_{2n},x_{2n}))$ via the identification 
of $(x^*,t)$ with $( x^{**},1-t)$ for $t\in \mathbb{I}$.  

A space $X$ is called {\it semi-locally strongly
contractible} at $x_0\in X$, if there exists a neighborhood $U$ of $x$ and
a continuous map $H: U\times \mathbb{I}\to X$ such that $H(u,1) = u$,
$H(u,0) = x_0$ and $H(x_0,t) = x_0$, for every $u\in U$ and $t\in
\mathbb{I}$. 
\medskip

{\it Proof of\/} Theorem~\ref{thm:main1}.
By the semi-local strong contractibility 
at $x_0\in X$, 
we have $U$ and $H$ as above. 
Choose neighborhoods $V_0$ and $V_1$ of $x_0$ and continuous maps $F,G:X\to
\mathbb{I}$ such that $\ov{V_0}\subseteq V_1,  \ov{V_1}\subseteq U$,
$F(x_0) = 0$ and $F(x) = 1$ for $x\notin V_0$ and 
$G(x) = 0$ for $x\in \ov{V_0}$ and $G(x) = 1$ for $x\notin V_1$. 
We denote the quotient map from $SC(X,x_0)$ to $CSC(X,x_0)$ by $\si$. 
Observe that the restriction of $\si$ to $p^{-1}(\mathcal{T})\sm \mathcal{T}$ 
is a bijection onto $CSC(X,x_0)$. 
We define $\tau : CSC(X,x_0)\to SC(X,x_0)$ by: 
\[
 \tau (\si (P,x)) = \left\{ 
\begin{array}{ll}
(F(x)\cdot p_1(P); p_2(P)), \quad&\mbox{ for }x\in V_0 \\
(P,H(x,G(x))), \quad&\mbox{ for }x\in V_1\sm V_0 \\ 
(P,x), \quad&\mbox{ for }x\notin V_1 
\end{array}
\right.
\]
for $P\in \mathcal{T}$ and $x\in X$. 
Since every element of $ CSC(X,x_0)$ can be expressed as $\si (P,x)$ and $(F(x_0)\cdot p_1(P);p_2(P)) = (0; p_2(P))$, $\tau$ is well-defined. 
Points from copies of $V_0$ are mapped by $\tau$ into the square 
of $SC(X,x_0)$ 
and points from copies of $X\sm V_1$ are mapped by $\tau$ into copies of $X$ 
in $SC(X,x_0)$. Therefore, using only level-preserving homotopies 
we can show that $\tau\circ \si$ is homotopic to $\op{id}_{SC(X,x_0)}$ 
and that $\si \circ\tau$ is also homotopic to $\op{id}_{CSC(X,x_0)}$. 

We use the same maps $F$, $G$ and $H$, and also denote by $\si$
the quotient map from
 $AC(X,x_0)$ to $CAC(X,x_0)$. When we use the presentation of 
$CAC(X,x_0)$ via the cones 
$C(\widetilde{\bigvee}_{n\in\mathbb{N}}(X_{2n-1},x_{2n-1}))$ and 
$C(\widetilde{\bigvee}_{n\in\mathbb{N}}(X_{2n},x_{2n}))$, 
$V_{0,n}$, $V_{1,n}$ and $U_n$ are copies of $V_0$, $V_1$ and $U$ on $X_n$,  
respectively and we also use $F$, $G$ and $H$ for the copies. 
\smallskip
\begin{center}
 \begin{tabular}{ c c c}
 \includegraphics[scale=0.85]{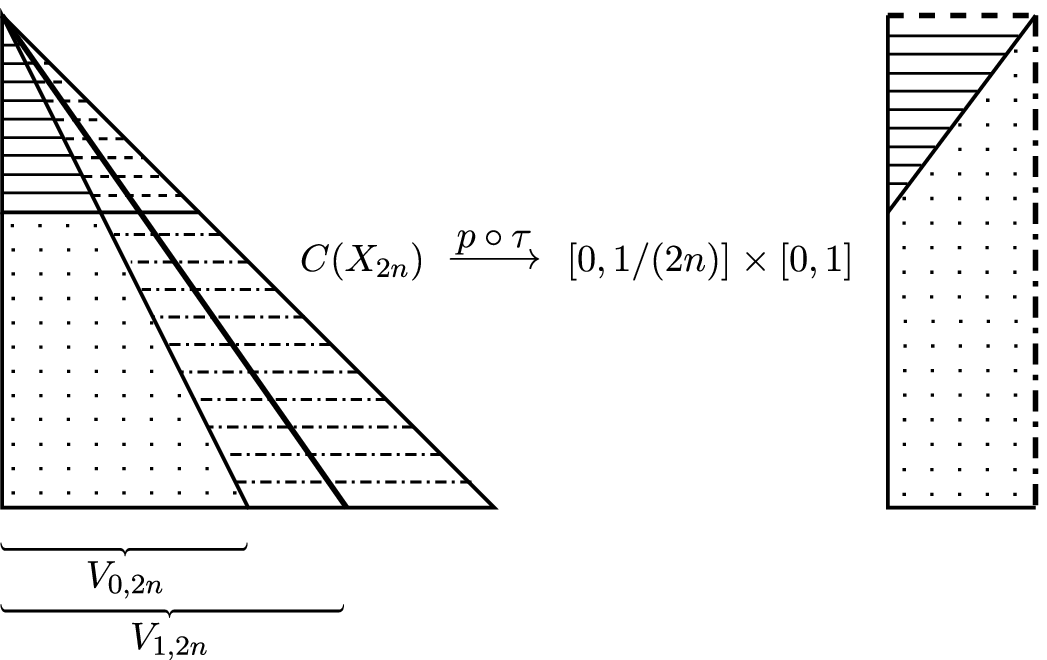} & \hspace{15 mm} &
 \end{tabular}
 \\
 \medskip
Figure 3: The figure visualizes the mapping $\tau$. The hatching styles 
indicate to which regions the corresponding domains are mapped. Observe that 
only $p\circ \tau$ is pictured; in particular the $\tau$-image of the 
dash-dotted area is considerably bigger than it is visible from the figure.   
\end{center}

The homotopy equivalence between $AC(X,x_0)$ and $CAC(X,x_0)$ is proved
similarly, but more care is necessary, because the homotopy inverse $\tau$ 
of $\si$, which we shall define in the sequel, is not level-preserving. We 
define $\tau$ only on
$C(\widetilde{\bigvee}_{n\in\mathbb{N}}(X_{2n},x_{2n}))$ for
notational convenience, but the other case is similar. Let $\tau (x,s) = $ 
\[
 \left\{ 
\begin{array}{ll}((1\! -\! s)F(x); s+(1\! -\! s)F(x)) &\mbox{ for }(2n\! -\! 1)/(2n)\le
 s\le 1, x\in X_{2n} \\
(F(x)/(2n) ; s(1+ F(x)/(2n\! -\! 1)))
&\mbox{ for }0\le s < (2n\! -\! 1)/(2n), x\in V_{0,2n} \\
(H(x, G(x)), 2ns/(2n\! -\! 1)) &\mbox{ for }0\le s < (2n\! -\! 1)/(2n), x\in
 V_{1,2n}\sm V_{0,2n} \\
(x, 2ns/(2n\! -\! 1)) &\mbox{ otherwise.}
\end{array}
\right. 
\]
Figure 3 shows the restriction of $p\circ\tau$ to $C(X_{2n})$. 
We remark that $\si (0; t) = (x^*,t)$ and $\tau (x^*,t) = (0;t)$ and 
$(1-2n)/(2n)$ converge to $1$. Using this information we can see that 
$\tau\circ \si$ is homotopic to 
$\op{id}_{AC(X,x_0)}$ and $\si \circ\tau$ is homotopic to 
$\op{id}_{CAC(X,x_0)}$.  
\qed
\medskip

{\it Proof of Theorem~\ref{thm:main2}}.
By virtue of Theorem~\ref{thm:main1} the fact that $CSC(S^1)$ and $CAC(S^1)$ are homotopy equivalent implies the present theorem.  
 To find this homotopy equivalence, we use embeddings of these spaces in 
$\mathbb{R}^3$. 
We define subsets of these embeddings as follows: 
\begin{eqnarray*}
S_n &\!\! =\!\! & \{ (x,y,z) \; | \; (x-1/n)^2 + y^2 = 1/n^2, z\in\mathbb{I}\}, \\
T_n &\!\! =\!\! & \{ (x,y,z)\; |\; \displaystyle{(x-(\frac{1}{n+1} + \frac{z}{n(n+1)}))^2} + y^2 = \displaystyle{(\frac{1}{n+1} + \frac{z}{n(n+1)})^2}, 
z\in\mathbb{I}\},\\
U_n &\!\! =\!\! & \{ (x,y,z) \; |\; (x-(1-z)/n)^2 + y^2 = (1-z)^2/n^2,
 z\in\mathbb{I} \}, \\ 
V_n &\!\! =\!\! &\{ (x,y,z) \; |\; (x+ z/n)^2 + y^2 = z^2/n^2, z\in\mathbb{I} \},  
\end{eqnarray*}
$S_n^+ = S_n \cap (\mathbb{R}\times \{ y\; |\; y\ge 0\}\times
\mathbb{R})$ and $T_n^+ =  T_n \cap (\mathbb{R}\times \{ y\; |\; y\ge
0\}\times\mathbb{R})$. 

By the preceding description, $CSC(S^1)$ is homotopy equivalent to the
subspace $Y_0 = \bigcup _{n\in\mathbb{N}}(S_n\cup T_n)$ of
$\mathbb{R}^3$. 
Let $Y_1$ be the following subspace of $\mathbb{R}^3$(see Figure 4): 
\[
 [0,2]\times \{ 0\} \times \mathbb{I}\; \cup \bigcup _{n\in
 \mathbb{N}} S_n^+ \cup T_n^+. 
\]
\begin{center}
 \begin{tabular}{ c c c}
 \includegraphics[scale=0.85]{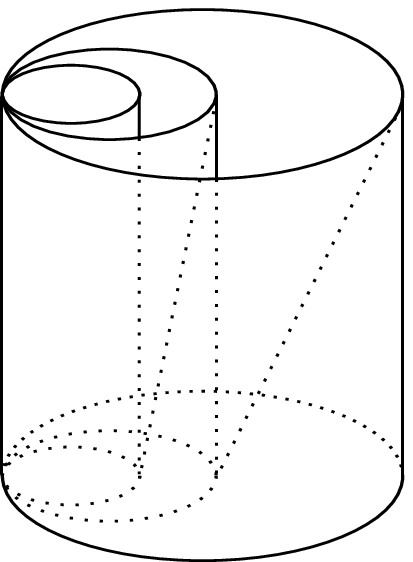} & \hspace{15 mm} &
 \includegraphics[scale=0.85]{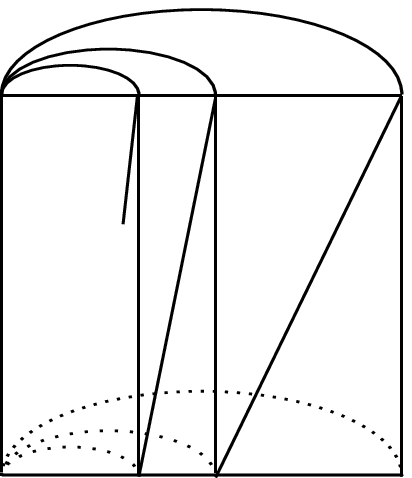} \\
 \end{tabular}
 \\
 \medskip
 Figure 4: $CSC(S^1)$ and $Y_1$
\end{center}

The homotopy equivalence between the spaces 
$\bigcup _{n\in \mathbb{N}}\{ (x,y) \; | \; (x-1/n)^2 + y^2 = 1/n^2\}$ 
and $[0,2]\times \{ 0\}\cup \bigcup _{n\in \mathbb{N}}\{ (x,y) \; |
\; (x-1/n)^2 + y^2 = 1/n^2,  y\ge 0\}$ induces a homotopy 
equivalence between $Y_0$ and
$Y_1$ and it now suffices to establish the homotopy equivalence between $Y_1$
and $CAC(S^1)$. 
We use the presentation of $CAC(S^1)$ 
from the introduction of Section 3
and we realize it in $\mathbb{R}^3$ as the following subspace: 
\[
\bigcup _{n\in \mathbb{N}} U_n \cup V_n. 
\]

Define $\si : Y_1 \to CAC(S^1)$ so that 
$\si (x,y,z) = (0,0,z)$, if
\[
\displaystyle{(x-\frac{1}{n})^2} + y^2 = \displaystyle{(\frac{1}{n})^2}
\]
or if
\[
\displaystyle{(x-(\frac{1}{n+1} + \frac{z}{n(n+1)}))^2} + y^2 = \displaystyle{(\frac{1}{n+1} + \frac{z}{n(n+1)})^2}
\]
for $n\in \mathbb{N}$ and $z\in \mathbb{I}$, and 
so that $\si$ maps level-preserving from  
\[
 \{ (x;y) \; | \; 0\le y\le \displaystyle{\frac{x-1/(n+1)}{n(n+1)}, \frac{1}{n+1}\le x\le \frac{1}{n}}\}
\]
onto $U_n$ and from 
\[
 \{ (x;y) \; | \; \displaystyle{\frac{x-1/(n+1)}{n(n+1)}\le y\le 1, \frac{1}{n+1}\le x\le \frac{1}{n}}\}
\]
onto $V_n$. 

We define $\tau :Y_1\to CAC(S^1)$ so that $\tau (0,0,z) = (0,0,z)$ for 
$z\in \mathbb{I}$ and so that $\tau$ maps $U_n$ homeomorphically 
onto 
\[
 S_n^+ \cup T_n^+ \cup \{ (x,y,z) \; | \; \displaystyle{\frac{1}{n+1}\le x\le 1/n, y=0, 0\le z\le \frac{x- 1/(n+1)}{n(n+1)}}\}
\]
and 
$V_n$ onto 
\[
 S_{n+1}^+ \cup\; T_n^+ \cup 
\{ (x,y,z) \; | \; \displaystyle{\frac{1}{n+1}\le x\le 1/n, y=0, \frac{x- 1/(n+1)}{n(n+1)}\le z\le 1}\}.
\]
We define $\tau$ uniformly with respect to $n$, in particular so that $\tau$ is $(1/n)$-level-preserving on $U_n$ and $V_n$.

\smallskip
\begin{center}
 \begin{tabular}{ c }
 \includegraphics[scale=0.85]{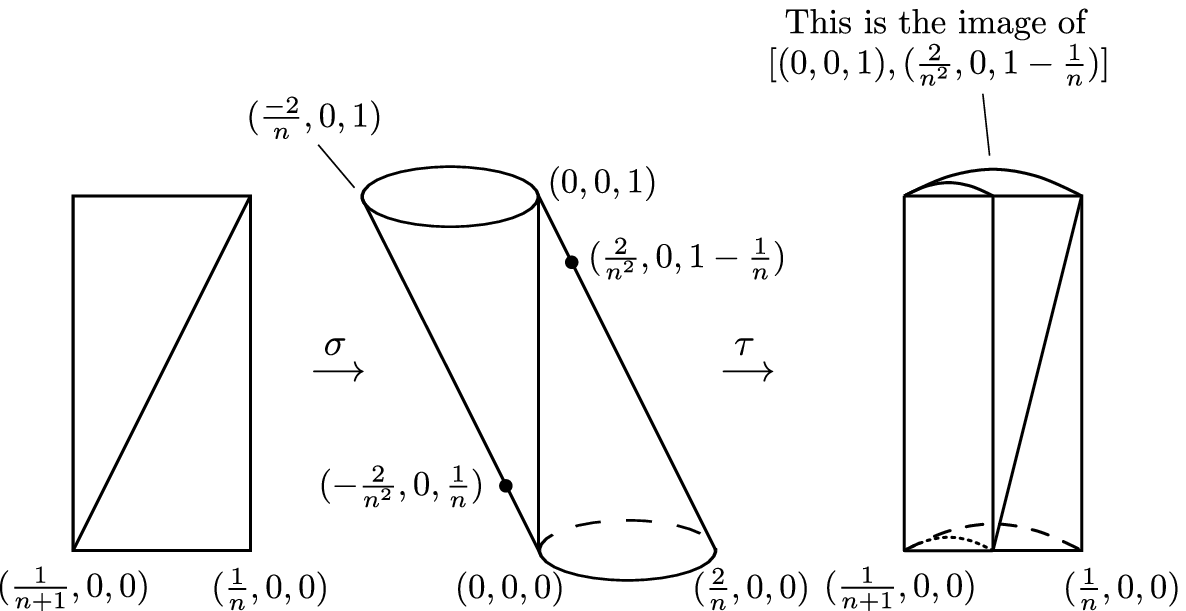} 
 \end{tabular}
 \\
 \smallskip
 Figure 5: Partial illustrations of the maps 
$\si$ and $\tau$. 
\end{center}

In particular,  
$\tau$ maps the segment $[(0,0,1), (2/n^2,1-1/n)]$ onto
the arc $\{ (x,y,1)\; |$ 
$(x-1/n)^2 + y^2 = 1/n^2,y\ge 0\}$ and 
the segment
$[(0,0,0), (-2/n^2, 0,1/n)]$ onto the arc 
$\{ (x,y,0)\; | \; (x-1/(n+1))^2 + y^2 = 1/(n+1)^2,y\ge 0\}$ 
which is the only dotted arc that we have drawn in Figure 5.

Since $[(0,0,1), (2/n^2,0,1-1/n)]$ converge to $(0,0,1)$ and 
$[(0,0,0), (-2/n^2, 0,1/n)]$ converge to $(0,0,0)$ when $n$ tends to 
infinity and $\si$ is level-preserving, and $\tau$ is $(1/n)$-level-preserving 
on $U_n$ and $V_n$, 
we can conclude that $\si\circ \tau$ and $\tau\circ \si$ are homotopic to 
the identity mapping on $CAC(S^1)$ and $Y_1$, respectively. 

\section{Proof of Theorem~\ref{thm:main3}}
The first lemma concerns a certain useful property of $AC(X,x)$ for any 
path-connected space $X$. 
\begin{lmm}\label{lmm:contractible}
Let $X$ be a path-connected space. Then 
$p_2^{-1}([0,1))$ and $p_2^{-1}((0,1])$ are contractible in
 $AC(X,x)$. The same also holds for $CAC(X,x)$. 
\end{lmm}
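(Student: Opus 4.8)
The plan is to treat the two claims by a single symmetric argument: I will produce a null-homotopy of the inclusion $p_2^{-1}([0,1))\hookrightarrow AC(X,x)$ (the case $b<1$) and obtain the statement for $p_2^{-1}((0,1])$ by the mirror construction $b\mapsto 1-b$, which interchanges the roles of the odd and even levels; the same formulas apply verbatim to $CAC(X,x)$ via the two-cone presentation given at the beginning of this section. First I would record the local picture. Writing $b=p_2$ for the level, each slice $a=1/(2n-1)$ carries a cone with vertex $A_{2n-1}$ at $b=0$ and a free copy of $X$ at $b=1$, while each slice $a=1/(2n)$ carries a free copy of $X$ at $b=0$ and vertex $B_{2n}$ at $b=1$; these are attached to the central square (to the axis $L$, in the $CAC$ picture). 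Intersecting with $\{b<1\}$ turns every odd slice into an open cone that retracts to its bottom vertex, but turns every even slice into a neighbourhood of its bottom copy of $X$. Hence $p_2^{-1}([0,1))$ is not contractible in itself, and the null-homotopy must use the even vertices $B_{2n}$, which live at $b=1$ outside the subspace, to kill these copies of $X$.

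The naive attempt — retract each cone to its own vertex — fails for two linked reasons, and isolating them is the heart of the matter. The odd cones can only be collapsed by pushing the level down to $0$, whereas the even cones can only be collapsed by pushing the level up to $1$; these prescriptions disagree along the shared axis, and, worse, they are mutually discontinuous at the accumulation $a\to0$, where $(w,1/n,b)\to(0;b)$ for every $w$, so that adjacent odd and even slices would be sent towards the opposite ends $A$ and $B$ of $L$. The resolution is to move only in the level coordinate and to do so uniformly in $a$, i.e. by a flat homotopy, so that the accumulation is automatically respected.

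Accordingly the plan is a three-phase flat homotopy. In Phase~1 I scale the level by $b\mapsto(1-t)b$, $t\in\mathbb{I}$; being uniform in $a$ and fixing $b=0$, it is continuous across the accumulation and compatible with the bottom collapses, and it sends each odd cone to its vertex $A_{2n-1}$, each even cone onto its bottom copy of $X$, and the square onto its bottom edge, landing in the level-$0$ subspace $W$ consisting of that edge with copies of $X$ attached at the points $1/(2n)$ (a wedge, up to homotopy). In Phase~2 I scale the level back up by $b\mapsto t$: this carries each bottom copy of $X$ up through its even cone to $B_{2n}$ — now a legitimate collapse, since at $b=1$ the even fibre $X\times\{1/2n\}\times\{1\}$ is identified to $B_{2n}$ — and carries the bottom edge up the square to the top edge, again uniformly hence continuously. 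For $CAC$ all even vertices coincide with $B$, so Phase~2 already ends at the single point $B$; for $AC$ a final Phase~3 contracts the remaining top edge $\mathbb{I}\times\{1\}$ to $B$ inside the square. Concatenating, the inclusion is homotopic to the constant map at $B$.

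The step I expect to be the main obstacle is verifying continuity of Phases~1 and~2 at the accumulation $a\to0$ and, for Phase~2, checking that pushing the even slices up genuinely realizes the identification $X\times\{1/2n\}\times\{1\}\mapsto B_{2n}$ while remaining compatible, along each attaching line, with the motion of the square. The conceptual point underlying the whole construction — and the reason the naive homotopy cannot work — is that one must collapse the odd (bottom-vertex) cones in Phase~1 \emph{before} raising the level in Phase~2: raising the level of an uncollapsed odd cone would un-collapse its vertex $A_{2n-1}$ and is not even well defined. Once flatness guarantees continuity and the phases are ordered so that each collapse is performed in the admissible direction, the remaining verifications are routine and identical for $AC(X,x)$ and $CAC(X,x)$.
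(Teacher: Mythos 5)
Your proposal is correct and follows essentially the same route as the paper: deformation-retract $p_2^{-1}([0,1))$ onto the level-$0$ set $M_0$, then push $M_0$ up to level $1$, where the even fibres collapse to the points $B_{2n}$ and everything lands on the top edge of the square (a single point of $L$ in the $CAC$ case), which is then contracted inside the square. The paper compresses this into three lines; the only inaccuracy in your write-up is the side remark that $p_2^{-1}([0,1))$ is never contractible in itself (false for, e.g., contractible $X$), but that claim plays no role in the argument.
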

\begin{proof}
Since $p_2^{-1}(\{ 0\})$ is a strong deformation retract of
 $p_2^{-1}([0,1))$, it suffices to show that $p_2^{-1}(\{ 0\})$ is
 contractible in $AC(X,x)$. Since $p_2^{-1}(\{ 0\})$ deforms to the
 segment $[ (0;1),(1;1)]$, $p_2^{-1}(\{ 0\})$ is contractible in
 $AC(X,x)$. The contractibility of $p_2^{-1}((0,1])$ in
 $AC(X,x)$ can be proved similarly.
In the case of $CAC(X,x)$ the above procedure yields a point instead of 
the segment $[ (0;0),(1;0)]$. 
\end{proof}
We prove (1),(2) and (3) separately.

\subsection{Proof of Theorem 1.3: (1) and (2)}

{\it Proof of\/ Theorem~\ref{thm:main3} (1)}.
The fact that $SC(\mathbb{H},o)$ and $AC(\mathbb{H},o)$ are not homotopy
equivalent was proved in \cite[Theorem 3.6]{EKR:simple},   
where it was established that any embedding of $\mathbb{H}$ to
$AC(\mathbb{H},o)$ is null homotopic, while there exists an essential
embedding of $\mathbb{H}$ to $SC(\mathbb{H},o)$. 

Hence, for the second nonequivalence it suffices to show that any 
embedding of $\mathbb{H}$ to
$CSC(\mathbb{H},o)$ is null homotopic.
Let $f:\mathbb{H}\to CSC(\mathbb{H},o)$ be an embedding. 
Let $x^*$ be the identified points with copies of $o\in \mathbb{H}$. 
Since 
$CSC(\mathbb{H},o)$ is locally contractible at points in
$CSC(\mathbb{H},o)\sm \{ x^*\}\times \mathbb{I}$, $f(o)$ belongs to 
$ \{ x^*\}\times \mathbb{I}$, which is the crucial difference between
$CSC(\mathbb{H},o)$ and $SC(\mathbb{H},o)$. Therefore, except for 
finitely many
circles of $\mathbb{H}$, $f$ maps into $p_2^{-1}([0,1))$ or
$p_2^{-1}((0,1])$. By a similar argument as in the proof of \cite[Lemma
3.3]{EKR:topsin} we can show that $f$ is homotopic to a map $f'$ into 
$p_2^{-1}([0,1))$ or $p_2^{-1}((0,1])$. By a similar
argument as in the proof of \cite[Theorem 1.1]{EKR:topsin} we can then  
produce a null-homotopy of $f'$. 
\qed

For a proof of Theorem~\ref{thm:main3}(2) we need some notions and
lemmas. 
For $AC(Z)$ and $P\in \bigcup _{n\in \mathbb{N}}\{ 1/(2n-1)\} \times
(0,1] \cup \{ 1/(2n) \times [0,1)\}$ , let $Z_P = p^{-1}(\{ P\})$, which
is homeomorphic to $Z$ and $M_s = p_2^{-1}(\{ s \})$ for $s\in \mathbb{I}$. 

An interval $[a,b]$ with $0\le a<b \le 1$ is called an {\it essential interval}
for a flat continuous map $f:AC_n(Z)\to AC(Z)$,
if the following hold: 
\begin{itemize}
\item[(1)] $f$ maps $p_2^{-1}(\{ a\})\cap AC_n(Z)$ into $M_0$ and
	   $p_2^{-1}(\{ b\})$ into $M_1$; 
\item[(2)] $f$ maps $p_2^{-1}((a,b))\cap AC_n(Z)$ into
	   $p_2^{-1}((0,1))$; and
\item[(3)] for every $a<c<b$, $f\, | \, p_2^{-1}\{ c\}$ is homotopic to
	   the restriction of the identity mapping on 
	   $AC_n(Z)$ to $p_2^{-1}\{ c\}$ in $p_2^{-1}((0,1))$. 
\end{itemize}
For $s\in (0,1)$ and $t\in \mathbb{I}$, we define a property
$P(s,t)$ of $H$ as follows: 
\begin{quote}
$H(M_s\cap AC_n(Z), t)\subseteq p_2^{-1}((0,1))$ 
and the restriction of $H(-,t)$ to $M_s\cap AC_n(Z)$
is homotopic to the identity mapping on $M_s\cap AC_n(Z)$ in 
$p_2^{-1}((0,1))$.
\end{quote}
We remark that by the flatness of $H$, if 
$H(M_s\cap AC_n(Z), t)\subseteq p_2^{-1}((0,1))$,  
there is a neighborhood $U$ of $(s;t)$ such that 
$H(M_{s'}\cap AC_n(Z), t')\subseteq p_2^{-1}((0,1))$ for
any $(s';t')\in U$.

\begin{lmm}\label{lmm:flat}
Let $Z$ be a compact path-connected space and $n\in \mathbb{N}$. 
Let $f\! : AC_n(Z)\to AC(Z)$ 
(or $f\! : AC_n(Z)\to CAC(Z)$) be a continuous map.  
If for each $y\in \mathbb{I}$ the set $p_2(f(M_y\cap AC_n(Z)))$ does 
not contain both points
$0$ and $1$, $f$ is homotopic to a flat map $f_1$. 

Moreover, let $H\! : AC_n(Z)\times\mathbb{I}\to AC(Z)$ be a homotopy between
 the identity mapping on $AC_n(Z)$ and $f$ such that for each $y\in \mathbb{I}$ 
and $t\in \mathbb{I}$, the set 
$p_2(H(M_y\cap AC_n(Z),t))$ does not contain both points
$0$ and $1$. 
Then there exists a flat homotopy between the identity mapping and $f_1$. 
\end{lmm}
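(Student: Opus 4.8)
The plan is to flatten $f$ by a homotopy that adjusts only the level (the $p_2$-coordinate) of each image point, sliding it vertically inside its copy of $Z$ (or inside the square) to a target level that depends solely on the source level $y=p_2(u)$. The obstacle is that such a vertical slide is discontinuous at the cone tips: near the bottom tip of an odd cone (level $0$) one cannot continuously push upward, and near the top tip of an even cone (level $1$) one cannot continuously push downward, because the tip forgets the $Z$-coordinate. The hypothesis that $p_2(f(M_y\cap AC_n(Z)))$ omits $0$ or omits $1$ is exactly what lets me choose, for each $y$, a consistent direction of sliding that stays clear of the offending tips.

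First I would record the extreme levels $\alpha(y)=\min p_2 f(M_y\cap AC_n(Z))$ and $\beta(y)=\max p_2 f(M_y\cap AC_n(Z))$; these exist because $AC_n(Z)$ is compact and $g:=p_2\circ f$ is continuous. The identity $\{y:\beta(y)\ge c\}=p_2(\{g\ge c\})$ exhibits this set as the continuous image of a compact set, hence closed, so $\beta$ is upper semicontinuous; dually $\alpha$ is lower semicontinuous. Consequently $A=\{\beta<1\}$ and $B=\{\alpha>0\}$ are open. Since $p_2 f(M_y\cap AC_n(Z))$ contains $0$ exactly when $\alpha(y)=0$ and contains $1$ exactly when $\beta(y)=1$, the hypothesis yields $A\cup B=\mathbb{I}$, and $C_A=\{\beta=1\}$, $C_B=\{\alpha=0\}$ are disjoint closed subsets of $\mathbb{I}$ (on $C_A$ one has $\alpha>0$, on $C_B$ one has $\beta<1$).

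Next I would choose, by Urysohn's lemma, a continuous $\phi:\mathbb{I}\to\mathbb{I}$ with $\phi\equiv 1$ on $C_A$, $\phi\equiv 0$ on $C_B$ and $0<\phi<1$ elsewhere (e.g. $\phi(y)=d(y,C_B)/(d(y,C_A)+d(y,C_B))$). Let $V(w,\lambda)$ denote the point obtained from $w$ by resetting its $p_2$-coordinate to $\lambda$ and keeping the remaining coordinates, and set $F(u,t)=V(f(u),(1-t)\,g(u)+t\,\phi(p_2(u)))$, so that $F(\cdot,0)=f$ and $F(\cdot,1)=:f_1$ satisfies $p_2 f_1(u)=\phi(p_2(u))$, hence is flat. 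The heart of the argument is continuity of $F$ at the cone tips: if $f(u_k)$ tends to the tip of an odd cone then $g(u_k)\to 0$, whence $\alpha(p_2(u_k))\to 0$; by lower semicontinuity every limit of $p_2(u_k)$ lies in $C_B$, so $\phi(p_2(u_k))\to 0$ and the interpolated target level tends to $0$, i.e. $F(u_k,t)$ returns to that same tip. The even cones are handled symmetrically via $\beta$ and $C_A$. This proves the first assertion; the $CAC(Z)$ target is identical, with $V$ the slide along the cone parameter.

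For the second assertion I would run the same construction with the extra parameter $t$: define $\alpha(y,t),\beta(y,t)$ from $H(\cdot,t)$, obtain disjoint closed sets $C_A,C_B\subseteq\mathbb{I}\times\mathbb{I}$, pick a continuous $\phi:\mathbb{I}\times\mathbb{I}\to\mathbb{I}$ with the analogous boundary values, and set $H_1(u,t)=V(H(u,t),\phi(p_2(u),t))$. Each $H_1(\cdot,t)$ is flat, and the tip argument applied jointly in $(u,t)$ makes $H_1$ continuous; it is a flat homotopy from $H_1(\cdot,0)$ to $H_1(\cdot,1)=f_1$. Since $H(\cdot,0)=\mathrm{id}$ is already flat, it remains to connect $\mathrm{id}$ to $H_1(\cdot,0)=V(\cdot,\phi(p_2(\cdot),0))$ by the flat homotopy $(u,s)\mapsto V(u,(1-s)\,p_2(u)+s\,\phi(p_2(u),0))$, continuous at the tips by the same semicontinuity input; concatenating the two yields the desired flat homotopy from $\mathrm{id}$ to $f_1$. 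The main obstacle throughout is precisely this tip continuity, which is why the semicontinuity of $\alpha,\beta$ must be matched to the boundary values of $\phi$.
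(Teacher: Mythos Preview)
Your argument is correct and follows the same strategy as the paper: slide $f(u)$ vertically to a target level depending only on the source level $p_2(u)$, chosen so that the target level is $0$ whenever $\alpha(y)=0$ and $1$ whenever $\beta(y)=1$; this is exactly what is needed to keep the slide well-defined and continuous at the cone tips. The paper's proof differs only in that it writes down the explicit target function $C(y)=\alpha(y)/(1+\alpha(y)-\beta(y))$ in place of your Urysohn $\phi$, and it asserts full continuity of $\alpha,\beta$ (from the product-like shape of $AC_n(Z)$) rather than the semicontinuity you prove; your semicontinuity argument is the more careful one and is all that is needed.

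The one substantive difference shows up in the second part. The explicit formula has the pleasant feature that if the map at time $t$ is already flat then $C(y,t)$ equals that common level; in particular $C(y,0)=y$ since $H(\cdot,0)=\mathrm{id}$. Hence the paper's flattened homotopy $\overline{H}(x,t)=V(H(x,t),C(p_2(x),t))$ starts at the identity with no extra work. Your Urysohn $\phi$ has no such normalization, which is why you need the concatenation step; that step is fine, but note that your endpoint $H_1(\cdot,1)=V\bigl(f(\cdot),\phi(p_2(\cdot),1)\bigr)$ need not coincide with the $f_1$ produced in the first paragraph unless you arrange the two choices of $\phi$ to agree at $t=1$. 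This is a one-line fix (use the $t=1$ slice of the two-variable $\phi$ as the target function in the first part, or append one more flat homotopy interpolating between the two target-level functions), but it should be said.
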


\begin{proof}
Fix the number $y$. Let $A(y)$ and $B(y)$ be the
minimum and the maximum of the function $p_2\circ f :M_y\cap AC_n(Z)\to
\mathbb{I}$, respectively, and also let 
\[
 C(y) =\frac{A(y)}{1 + A(y) - B(y)}.
\]
We remark that the continuity of $A(y)$ and $B(y)$ 
follows from the fact that the shape of $AC_n(Z)$ is like a direct product. 

Let $\varphi (s_0, s_1,t) = s_0 + (s_1-s_0)t$. 
For $x\in p_2^{-1}((0,1))$ and $u\in \mathbb{I}$, let $x_u\in
 p_2^{-1}(\{ u\})$ be the point determined by the retraction of
 $p_2^{-1}([p_2(x),u])$ or $p_2^{-1}([u,p_2(x)])$ to $p_2^{-1}(\{
 u\})$, which is derived from the direct product structure, 
 and for $x\in p_2^{-1}(\{ 0\}\cup \{ 1\})$ and $u\in \mathbb{I}$,
let $x_u = x$. 
Define $F\! : AC_n \times \mathbb{I} \to AC _n$ by  
\[
 F(x,t) = x_{\varphi (p_2(f(x)), C(p_2(f(x))), t)}. 
\]
Since $A(p_2(f(x)),t)=0$ implies $C(p_2(f(x)),t)=0$ and $B(p_2(f(x)),t)=1$ 
implies $C(p_2(f(x)))=1$ and both cannot occur at the same time, 
$F$ is a deformation retraction, 
$F(-,0)$ is $f$ and $F(-,1)$ is a flat map. 

If $f$ is a flat map, then $f_1 = f$. Since the identity mapping
 on $AC_n$ is a flat map, applying this reasoning to the homotopy $H$ 
we have a homotopy $\ov{H}$ for the second statement, i.e. we 
define $A(y,t), B(y,t), C(y,t)$ using $H(x,t)$ instead of $f(x)$ and let  
\[
 \ov{H}(x,t) = H(x,t)_{\varphi (p_2(H(x,t)), C(p_2(H(x,t),t)), t)}.
\] 
\end{proof}
\begin{lmm}\label{lmm:crucial}
Let $Z$ be a noncontractible space and $H:AC_{2m-1,2m}(Z)\times\mathbb{I}
\to AC(Z)$ a flat homotopy. If $H(M_0\cap AC_{2m-1,2m}(Z),t_0)
\subseteq p_2^{-1}((0,1))$, then
 there exists a neighborhood $U$ of $(0;t_0)$ such that $H$ does not
 satisfy $P(s,t)$ for
 any $(s;t)\in U$ with $s>0$. An analogous statement holds for 
$H(M_1\cap AC_{2m-1,2m}(Z),t_0) \subseteq p_2^{-1}((0,1))$. 
\end{lmm}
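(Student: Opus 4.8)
The plan is to play off the degeneration of one of the two columns of $AC_{2m-1,2m}(Z)$ against the noncontractibility of $Z$. Write $W:=AC_{2m-1,2m}(Z)$ and, for $0<s<1$, let $K_s:=p^{-1}(\{(1/(2m-1);s)\})\cong Z$ be the copy of $Z$ lying in the odd column at level $s$. First I would record the shape of the slices: for $0<s<1$ the slice $M_s\cap W$ is the union of $K_s$, the even-column copy of $Z$ at level $s$, and the wall segment joining their base points, so it is homotopy equivalent to $Z\vee Z$; but $M_0\cap W$ collapses the odd column to the single cone vertex $A_{2m-1}$ and is homotopy equivalent to a single $Z$. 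The whole argument rests on the contrast that the inclusion $K_s\hookrightarrow p_2^{-1}((0,1))$ is \emph{essential}, while $H(-,t)|_{K_s}$ is \emph{null-homotopic} in $p_2^{-1}((0,1))$ once $(s;t)$ is close to $(0;t_0)$; these cannot coexist if $P(s,t)$ holds. I will treat the hypothesis on $M_0$; the statement for $M_1$ is the mirror image, with the even column (whose top is collapsed at level $1$) replacing the odd one.

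For the essentiality I would use the direct product structure of $AC(Z)$ away from the levels $0$ and $1$ (the structure already invoked in the proof of Lemma~\ref{lmm:flat}): the restriction of this structure shows that $p_2^{-1}((0,1))$ deformation retracts onto a single level $M_{s_0}$ for a fixed $s_0\in(0,1)$, which is a Hawaiian-type wedge of the column copies of $Z$ strung along the wall. Collapsing every summand of this wedge except the odd column gives a retraction onto its odd-column copy of $Z$. Composing with the product retraction, the inclusion $K_s\hookrightarrow p_2^{-1}((0,1))$ acquires a left homotopy inverse onto a copy of $Z$; since $Z$ is noncontractible, this inclusion is essential.

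The crux is the null-homotopy together with its uniformity in $(s;t)$. Since $Z$ is compact, $M_0\cap W$ is a compact set mapped by $H(-,t_0)$ into the open set $p_2^{-1}((0,1))$; thus $H^{-1}(p_2^{-1}((0,1)))$ is open and contains $(M_0\cap W)\times\{t_0\}$, hence contains a tube $N\times(t_0-\de,t_0+\de)$ with $N$ a neighborhood of $M_0\cap W$ in $W$. Because $A_{2m-1}$ is the cone vertex of the odd column, a basic neighborhood of it consists of the lower sub-cone $\{u:p_1(u)=1/(2m-1),\,p_2(u)<s_1\}$ for some $s_1>0$, so $N$ contains this entire sub-cone. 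I then set $U=\{(s;t):0\le s<s_1,\ |t-t_0|<\de\}$. For $(s;t)\in U$ the contraction of $K_s$ that slides it down the odd column to $A_{2m-1}$ (i.e. $p_2\downarrow 0$) stays inside the sub-cone, so applying $H(-,t)$ produces a null-homotopy of $H(-,t)|_{K_s}$ whose image lies entirely in $p_2^{-1}((0,1))$.

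Finally I would combine these. Fix $(s;t)\in U$ with $s>0$. If $H(M_s\cap W,t)\not\subseteq p_2^{-1}((0,1))$, then $P(s,t)$ already fails. Otherwise, suppose $P(s,t)$ held; then $H(-,t)|_{M_s\cap W}$ is homotopic in $p_2^{-1}((0,1))$ to the inclusion of $M_s\cap W$, and restricting this homotopy to $K_s\subseteq M_s\cap W$ makes $H(-,t)|_{K_s}$ homotopic in $p_2^{-1}((0,1))$ to the inclusion $K_s\hookrightarrow p_2^{-1}((0,1))$. This is impossible, since the first map is null-homotopic while the second is essential. Hence $P(s,t)$ fails for every $(s;t)\in U$ with $s>0$. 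I expect the main obstacle to be the tube step: checking that a neighborhood of the degenerate slice $M_0\cap W$ really captures a full lower sub-cone of the odd column, uniformly enough that a single $U$ works, and that $H$ carries this sub-cone into the open middle $p_2^{-1}((0,1))$. This hinges on the cone-vertex neighborhood basis at $A_{2m-1}$ together with the compactness of $Z$.
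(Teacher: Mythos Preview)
Your argument is correct and follows essentially the same line as the paper: use that the odd column degenerates to the cone vertex $A_{2m-1}$ at level $0$ to show $H(-,t)|_{K_s}$ is null-homotopic in $p_2^{-1}((0,1))$, use a retraction of $p_2^{-1}((0,1))$ onto a copy of $Z$ to show the inclusion of $K_s$ is essential, and conclude that $P(s,t)$ fails. The only cosmetic difference is that the paper obtains the neighborhood $U$ via the flatness remark preceding the lemma (so that $H(M_s\cap W,t)\subseteq p_2^{-1}((0,1))$ for all $(s;t)\in U$, avoiding your case split), whereas you use a tube-lemma argument that does not invoke flatness.
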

\begin{proof}
We have a neighborhood $U$ of $(0;t_0)$ such that 
$H(M_s\cap AC_{2m-1,2m}, t)\subseteq p_2^{-1}((0,1))$ for
any $(s;t)\in U$. We fix $(s;t)\in U$ with $s>0$. 
Let $P_n = (1/n;s)$ and  $I_n = [P_{n+1},P_n]$. 
Then we have $M_s = \{ (0; s)\} \cup \bigcup _{n=1}^\infty 
I_n\cup Z_{P_n}$ and $M_s\cap AC_{2m-1,2m}(Z) = Z_{P_{2m}}\cup Z_{P_{2m-1}}
\cup I_{2m-1}$. 
Since $H(-,t)$ maps $\bigcup _{u\in [P_{2m-1},A_{2m-1}]}Z_u$ into $p^{-1}(\mathbb{I}\times (0,1))$, the restriction of $H(-,t)$ to 
$Z_{P_{2m-1}}$ is null-homotopic in
 $p^{-1}(\mathbb{I}\times (0,1))$. 
Since $M_s$ is a strong deformation retract of 
$p^{-1}(\mathbb{I}\times (0,1))$ and $Z_{P_{2m-1}}$ is a retract of 
$p^{-1}(\mathbb{I}\times (0,1))$ and $Z_{P_{2m-1}}$ is not contractible, 
 the identity mapping on $Z_{P_{2m-1}}$ is not
 homotopic to the restriction of $H(-,t)$ to $Z_{P_{2m-1}}$, which
 implies that $P(s,t)$ does not hold.   

To prove the statement for $H(M_1,t_0)$ we use $Z_{P_{2m}}$ 
and argue on a neighborhood of $B_{2m}$ to obtain a similar conclusion.
\end{proof}

\begin{lmm}\label{lmm:critical}
Let $H:AC_n(\mathbb{H},o)\times\mathbb{I}\to AC(\mathbb{H},o)$ be a flat
 homotopy between the identity mapping on $AC_n(\mathbb{H},o)$ and $f$. 
Then there exist $a_0,b_0 \in \mathbb{I}$ with $a_0<b_0$ such that
 $[a_0,b_0]$ is an essential interval for $f$.
\end{lmm}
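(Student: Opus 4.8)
The plan is to translate everything into the behaviour of the single continuous ``level function'' that the flat homotopy induces, and then to run a connectedness argument in the parameter square, using Lemma~\ref{lmm:crucial} as the rigidity input.

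First I would record that, since $H$ is flat, for each $t$ the value $p_2(H(u,t))$ depends only on $p_2(u)$, so there is a continuous map $\Phi:\mathbb{I}\times\mathbb{I}\to\mathbb{I}$ with $p_2(H(u,t))=\Phi(p_2(u),t)$; here $\Phi(\cdot,0)=\op{id}$ and $\ov{f}:=\Phi(\cdot,1)$ is the level function of $f$. With this notation the three clauses defining an essential interval become purely level-theoretic: clause (1) says $\ov{f}(a_0)=0$ and $\ov{f}(b_0)=1$, while clauses (2) and (3) together say exactly that $P(c,1)$ holds for every $c\in(a_0,b_0)$, clause (2) being the image part of $P(c,1)$ and clause (3) its homotopy part. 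I would then isolate the sufficiency observation: if the whole vertical trajectory $\Phi(\{c\}\times\mathbb{I})$ lies in $(0,1)$, i.e. $H(M_c\cap AC_n,t)\subseteq p_2^{-1}((0,1))$ for all $t$, then $H$ itself restricts to a homotopy inside $p_2^{-1}((0,1))$ from $\op{id}|_{M_c}$ to $f|_{M_c}$, so $P(c,1)$ holds. Hence any interval $(a_0,b_0)$ on which every trajectory stays interior automatically satisfies clauses (2) and (3), and it remains only to pin the two endpoints.

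Next I would set up the open region $R=\{(s,t)\in(0,1)\times\mathbb{I}:P(s,t)\}$. It contains the entire bottom edge $(0,1)\times\{0\}$, because $H(\cdot,0)=\op{id}$ makes $P(s,0)$ trivially true for $s\in(0,1)$; openness of $R$ follows from the remark recorded after the definition of $P(s,t)$ (for the image part) together with the stability of ``homotopic to the identity'' under the canonical product retractions identifying nearby level sets (for the homotopy part). The crucial rigidity comes from Lemma~\ref{lmm:crucial} applied blockwise to each $AC_{2m-1,2m}$ with $2m-1\ge n$: its contrapositive says that $R$ can accumulate on the left wall $\{0\}\times\mathbb{I}$ only at times $t$ with $\Phi(0,t)\in\{0,1\}$ and, by the symmetric statement, on the right wall $\{1\}\times\mathbb{I}$ only where $\Phi(1,t)\in\{0,1\}$. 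In other words, so long as the image of the bottom (resp. top) stays off the floor and ceiling, the good region is forced away from that wall; this is precisely where the noncontractibility of $\mathbb{H}$ enters.

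Finally I would extract the interval by a planar connectedness argument. Starting from the component $R_0$ of $R$ that contains the bottom edge, along which $\Phi(\cdot,0)=\op{id}$ runs from $0$ up to $1$, I would use the two wall barriers above to show that $R_0$ must persist up to the top edge $t=1$, producing a nonempty open set $\Omega=\{c:\Phi(\{c\}\times\mathbb{I})\subseteq(0,1)\}$ of interior trajectories in the top slice, on which $P(\cdot,1)$ holds by the sufficiency observation. Choosing the component $(a_0,b_0)$ of $\Omega$ that descends within $R_0$ to the bottom edge and tracking its boundary trajectories as their exit times are pushed to $t=1$, I would conclude that one endpoint is forced to $\ov{f}(a_0)=0$ and the other to $\ov{f}(b_0)=1$, with $a_0<b_0$ inherited from the ordering $0<1$ on the bottom edge; this is the desired essential interval. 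The main obstacle is exactly this last extraction: Lemma~\ref{lmm:crucial} controls where $R$ meets the domain walls at \emph{all} times, whereas clause (1) is a statement about the \emph{final} map on interior levels, so the delicate point is the intermediate-value/separation argument in the square that converts the wall rigidity into the requirement that the two ends of the surviving top band map precisely to $M_0$ and to $M_1$. Without the noncontractibility input the band could collapse, so this step must genuinely invoke Lemma~\ref{lmm:crucial} rather than continuity alone.
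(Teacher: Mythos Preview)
Your setup matches the paper's: the flat level function $\Phi$, the property $P(s,t)$, the open region $R$ containing the bottom edge, and Lemma~\ref{lmm:crucial} as wall rigidity are exactly the ingredients the paper assembles. You are also right that the whole difficulty is concentrated in the last extraction, and that is where your outline has a real gap.

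The specific problem is the set $\Omega=\{c:\Phi(\{c\}\times\mathbb{I})\subseteq(0,1)\}$. Your sufficiency observation (interior trajectory $\Rightarrow$ $P(c,1)$) is correct, but it is not necessary, and nothing in the connectedness picture forces $\Omega$ to be nonempty. Even granting that $R_0$ meets the top edge---which you have not actually argued, since the wall barriers only restrict where $R$ can accumulate on the \emph{sides}, not whether it can pinch off in the interior---that would give a \emph{path} in $R$ from bottom to top, not a vertical segment $\{c\}\times\mathbb{I}\subseteq R$. One can write down level functions $\Phi$ (for example one that at some intermediate time collapses $[0,\tfrac12]$ to $0$ and at a later time collapses $[\tfrac12,1]$ to $1$) for which every trajectory touches $\{0,1\}$, so $\Omega=\emptyset$, while the lemma's conclusion nonetheless holds. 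Your argument for the \emph{ordered} endpoint values $\ov f(a_0)=0$, $\ov f(b_0)=1$ rather than the reverse is likewise only heuristic.

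The paper closes this gap by gluing the two domain walls: take a winding $d:[0,1]\to S^1$ with $d(0)=d(1)=z_0$ and define $H^*:S^1\times\mathbb{I}\to S^1$ by $H^*(u,t)=d\bigl(\Phi(d^{-1}(u),t)\bigr)$ when $P(d^{-1}(u),t)$ holds and $H^*=z_0$ otherwise. All the pieces you collected now become the continuity proof for $H^*$: at interior points where $P$ fails but $\Phi\in(0,1)$, stability of the homotopy class makes $H^*\equiv z_0$ nearby; at the basepoint $u=z_0$, Lemma~\ref{lmm:crucial} handles exactly the case $\Phi(0,t)\in(0,1)$ or $\Phi(1,t)\in(0,1)$. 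Once $H^*$ is continuous, $H^*(\cdot,0)=\op{id}_{S^1}$ has degree~$1$, hence so does $H^*(\cdot,1)$, and a degree-one map $S^1\to S^1$ immediately furnishes an interval $[a_0,b_0]$ on which it winds once in the positive direction. That single degree statement replaces your planar connectedness argument and simultaneously delivers the orientation: $P(c,1)$ for $a_0<c<b_0$, $\ov f(a_0)=0$, and $\ov f(b_0)=1$.
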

\begin{proof}
Let $d: [0,1]\to S^1$ be a winding with the base point $z_0$, i.e. both
 $d | [0,1)$ and $d | (0,1]$ are bijective continuous mappings with
 $d(0)=d(1)=z_0$. 

We define a homotopy $H^{\ast}:S^1\times{\mathbb{I}}\to S^1$ as follows:
 \[
 H^{\ast}(u, t) = \left\{ 
\begin{array}{rl}
d(p_2(H(M_{d^{-1}(u)},t))) &\quad \mbox{ if }u\neq z_0 \mbox{ and }P(d^{-1}(u),t) \mbox{ holds;}\\
z_0, &\quad \mbox{ otherwise. }
\end{array}
\right.
\] 
First we show the continuity of $H^*$.

If $u\neq z_0$ and $P(d^{-1}(u),t)$ holds, the continuity at $(u,t)$ is
 clear. Otherwise, $u\neq z_0$ and $P(d^{-1}(u),t)$ does not hold, or 
$u = z_0$. We consider two cases: 

{\it Case} 1.  Suppose that $u\neq z_0$ and $P(d^{-1}(u),t)$ does not hold: If 
$p_2\circ H(M_{d^{-1}(u)},t) = \{ 0\}$ or $\{ 1\}$, then the
 continuity at $(u,t)$ follows from that of $H$. Otherwise, since
 $H(-,t)$ maps $M_{d^{-1}(u)}\cap AC_n(Z)$ continuously with respect to
 $u$ and $t$, the restriction of $H(-,t)$ to $M_{d^{-1}(u)}\cap AC_n(Z)$
 is not homotopic to the identity mapping on $M_{d^{-1}(u)}\cap AC_n(Z)$ in
 $p^{-1}(\mathbb{I}\times (0,1))$, i.e. $H^*$ takes the value $z_0$ in a
 neighborhood of $(u,t)$. 

{\it Case} 2. Suppose that $u= z_0$: If each of $p_2(H(M_0\cap AC_n(Z),t))$ and
 $p_2(H(M_1\cap AC_n(Z),t))$ 
is equal to either $\{ 0\}$ or $\{ 1\}$, the continuity at $(u,t)$ follows
 from that of $H$. The remaining case is when 
$p_2(H(M_0\cap AC_n(Z),t))\subseteq (0,1)$ or 
$p_2(H(M_1\cap AC_n(Z),t))\subseteq (0,1)$. In this case the
 continuity follows by Lemma~\ref{lmm:crucial}. 

We have shown that $H^{\ast}$ is a homotopy and hence we have 
$a_0,b_0 \in \mathbb{I}$ with $a_0<b_0$ such that
$H^{\ast}(a_0,1) = H^{\ast}(b_0,1) = z_0$, $H^{\ast}(s,1)\neq z_0$ 
for $a_0<s<b_0$ and the orientation of the winding of $H^{\ast}(-,1)$ 
on $[a_0,b_0]$ is the same as that of $d$.   
The last statement implies that $f(a_0) = H(a_0,1) = 0$ and 
$f(b_0) = H(b_0,1) = 1$. 
We have $P(s,1)$ for $a_0<s<b_0$, which implies 
that $[a_0,b_0]$ is an essential interval for $f$.
\end{proof}
{\it Proof of\/ Theorem~\ref{thm:main3} (2)}.
To show the nonequivalence by contradiction suppose that we have 
$f:AC(\mathbb{H},o)\to CAC(\mathbb{H},o)$ and   
$g:CAC(\mathbb{H},o)\to AC(\mathbb{H},o)$ such that 
$g\circ f$ is homotopic to the identity mapping on $AC(\mathbb{H},o)$ 
via a homotopy $H$. 

There exists a sufficiently large $n$ such that the restriction of $H$ to
$AC_n(\mathbb{H},o)\times \mathbb{I}$ satisfies the condition for $H$ in
Lemma~\ref{lmm:flat}. 
Apply Lemma~\ref{lmm:flat} to the homotopy $H$ between the identity 
mapping on
$AC_n(\mathbb{H},o)$ and $g\circ f \, | \, AC_n(\mathbb{H},o)$, to
get a flat homotopy $\ov{H}$ between the identity mapping 
on $AC_n(\mathbb{H},o)$
and a map which is given by applying Lemma~\ref{lmm:flat} 
and homotopic to $g\circ f \, | \, AC_n(\mathbb{H},o)$. 
According to Lemma~\ref{lmm:critical}, we have an essential interval
$[a_0,b_0]$ for $\ov{H}(-,1)$. We remark that $g\circ f
(p_2^{-1}((a_0,b_0))\cap AC_n(\mathbb{H},o)) \subseteq
p_2^{-1}((0,1))$. 

First we claim that $f((1/m;s)) \in L$ for $a_0\le s \le b_0$ and $m\ge
n$. To show this by contradiction, suppose that $f((1/m;s_0))\notin L$. Then
$f((1/m;s_0))$ is contained in a local disk. This implies that the restriction of
$\ov{H}(-,1)$ to $p_2^{-1}(\{ s_0\})$ is not homotopic to the identity
on $p_2^{-1}(\{ s_0\})$, since $\ov{H}(-,1)$ is obtained from $g\circ f$
as in Lemma~\ref{lmm:flat}. Hence $f(\{ 1/m\}\times [a_0,b_0]) \subseteq
L$ for $m\ge n$. Thus, for sufficiently large distinct $m_0, m_1$, $f(\{
1/m_0\}\times [a_0,b_0])$ and $f(\{ 1/m_1\}\times [a_0,b_0])$ are included 
as intervals on $L$. 
Therefore we conclude that some subinterval of $\{
1/m_0\}\times [a_0,b_0]$ 
or $\{ 1/m_1\}\times [a_0,b_0]$ 
is mapped by $g\circ f$ outside of $\{1/m_0\}\times (0,1)$ or 
outside of $\{1/m_1\}\times (0,1)$, i.e. 
at least one of these intervals goes through the base square $\mathbb{I}^2$ 
by the homotopy $\ov{H}$.  
Hence the restriction of $\ov{H}(-,1)$
to $M_s\cap AC_n(Z)$ is not homotopic to the identity on
$p_2^{-1}((0,1))$ for $a_0<s<b_0$, which is a contradiction. 
\qed

\subsection{Proof of Theorem 1.3 (3)}
Next we show that $CSC(\mathbb{H},o)$ and $CAC(\mathbb{H},o)$ are
 homotopy equivalent. 
Actually we show that $CSC(\mathbb{H},o)$ is homotopy equivalent to 
$CSC(S^1,z_0)$ and $CAC(\mathbb{H},o)$ is homeomorphic to
$CAC(S^1,z_0)$. Then the conclusion follows from Theorems~\ref{thm:main1}
and \ref{thm:main2}.
To describe $CSC(\mathbb{H},o)$ and $CAC(\mathbb{H},o)$, we introduce 
the constructions 
$\widetilde{\mathbb{I}\bigvee}_{n\in\mathbb{N}}CSC(X_n,x_n)$ 
and $\widetilde{\mathbb{I}\bigvee}_{n\in\mathbb{N}}CAC(X_n,x_n)$. 

For spaces $X_n$ with $x_n\in X_n$, let 
$Y_n = CSC(X_n,x_n)$ and $Z_n = CAC(X_n,x_n)$. 
First we describe the construction of 
$\widetilde{\mathbb{I}\bigvee}_{n\in\mathbb{N}}CSC(X_n,x_n)$. 
We identify all copies of $(0;s)$ in $Y_n$ for each $s\in
 \mathbb{I}$ and have the quotient set of the disjoint union of $Y_n$'s. 
Then we induce a topology so that any neighborhood of $\{ (0;s) \, | \,
 s\in\mathbb{I}\}$ contains almost all $Y_n$'s. More precisely, let
${}_np$ and ${}_np_2$  be copies of the projections $p$ and $p_2$ for $Y_n$,
 respectively. Then, a basic neighborhood base of $(0;y)$ is 
\[
 \bigcup _{n=1}^k {}_np^{-1}([0,\vep )\times (a,b)) \cup
 \bigcup _{n=k}^\infty  {}_np_2^{-1}((a,b)) 
\] 
for $\vep >0$ and $a<y<b$. 
We denote this space by
$\widetilde{\mathbb{I}\bigvee}_{n\in\mathbb{N}}CSC(X_n,x_n)$. 

We define $\widetilde{\mathbb{I}\bigvee}_{n\in\mathbb{N}}CAC(X_n,x_n)$ 
analogously, i.e.  
we identify all copies of $(0;s)$ in $Z_n$ for each $s\in
 \mathbb{I}$ and have the quotient set of the disjoint union of $Z_n$'s. 
Then we induce a topology so that any neighborhood of $\{ (0;s) \, | \,
 s\in\mathbb{I}\}$ contains almost all $Z_n$'s. 

It is then easy to see that
$CSC((\widetilde{\bigvee}_{n\in\mathbb{N}}(X_n,x_n),x^*)$ is
homeomorphic to 

\noindent
$\widetilde{\mathbb{I}\bigvee}_{n\in\mathbb{N}}CSC(X_n,x_n)$ and 
 $CAC((\widetilde{\bigvee}_{n\in\mathbb{N}}(X_n,x_n),x^*)$ is 
homeomorphic to

\noindent
$\widetilde{\mathbb{I}\bigvee}_{n\in\mathbb{N}}CAC(X_n,x_n)$, where
$x^*$ denotes the point that results from the identification of all $x_n$. 

\smallskip
{\it Proof of Theorem~\ref{thm:main3} (3)}.
We recall the proof of Theorem~\ref{thm:main2}. There, the maps
except $\tau$ assuring homotopy equivalences are level-preserving. 
Moreover $\tau$ can be taken to be $\vep$-level-preserving for 
each $\vep >0$. 
Let $X_n$ be copies of the circle $S^1$.  
Accordingly, we have continuous maps $f_n: CSC(X_n)\to CAC(X_n)$, 
 $g_n: CAC(X_n)\to CSC(X_n)$ and homotopies 
$F_n: CSC(X_n)\times \mathbb{I}\to CSC(X_n)$, 
$G_n: CAC(X_n)\times \mathbb{I}\to CAC(X_n)$ such that: 
\begin{itemize}
\item[(1)] $f_n$ is level-preserving; 
\item[(2)] $g_n$, $F_n$, $G_n$ are $1/n$-level-preserving; and
\item[(3)] $F_n((0;y),t) = (0;y)$ and $G_n((0;y),t)=(0;y)$ for $y,t\in\mathbb{I}$. 
\end{itemize}
Define 
 $f: \widetilde{\mathbb{I}\bigvee}_{n\in\mathbb{N}}CSC(X_n)\to 
\widetilde{\mathbb{I}\bigvee}_{n\in\mathbb{N}}CAC(X_n)$, 
\; $g: \widetilde{\mathbb{I}\bigvee}_{n\in\mathbb{N}}CAC(X_n)\to$ 

\noindent
$\widetilde{\mathbb{I}\bigvee}_{n\in\mathbb{N}}CSC(X_n)$ and homotopies 
$F: \widetilde{\mathbb{I}\bigvee}_{n\in\mathbb{N}}CSC(X_n)\times \mathbb{I}\to \widetilde{\mathbb{I}\bigvee}_{n\in\mathbb{N}} CSC(X_n)$, 
$G: \widetilde{\mathbb{I}\bigvee}_{n\in\mathbb{N}}CAC(X_n)\times \mathbb{I}\to \widetilde{\mathbb{I}\bigvee}_{n\in\mathbb{N}}CAC(X_n)$ as unions of 
$f_n,g_n,F_n,G_n$ respectively. It suffices to verify their continuity. 
We show this for $F$, since the proofs for the others are similar. 
Continuity at points other than $(0;y)$ is obvious. 
For $ \bigcup _{m=1}^{k_0} {}_mp^{-1}([0,\vep )\times (a,b)) \cup
 \bigcup _{m=k_0}^\infty  {}_mp_2^{-1}((a,b)) $ 
with $\vep >0$ and $a<y_0<b$, choose $k\ge k_0$ so that 
$a+1/k< y_0 < b-1/k$. Then, we have 
$a< p_2\circ F_m ((0;y),t)< b$ for $m\ge k$ and $t\in\mathbb{I}$. 
Hence using the continuity of $F_m$ for $m<k$ we get the desired
 neighborhood of 
$(0;y_0)$ which assures the continuity of $F$ at $(0;y_0)$.   
Since $(\mathbb{H},o)$ is
$(\widetilde{\bigvee}_{n\in\mathbb{N}}(X_n,x_n),x^* )$, we conclude that 
$CSC(\mathbb{H},o)$ and $CAC(\mathbb{H},o)$ are homotopy equivalent. 
\qed
\section{Proof of Theorem~\ref{thm:main4}}
We shall consider $2$-cycles in $SC(T,x)$, where $T$
is a $2$-dimensional torus with a distinguished point $x$. 
For this purpose we prove
some basic results about oriented closed surfaces and wedges of tori. 
Let $(T_i,x_i)$ be copies of the torus $(T,x)$ and $r_i: \bigvee
_{i=1}^n (T_i,x_i) \to T_i$ be the retractions, where 
the attaching point of the wedge $\bigvee_{i=1}^n (T_i,x_i)$ is 
denoted by $o$ and $r_i$ maps $T_j$ to $o$ for $j\neq i$.  
We denote the genus of an oriented closed surface $S$ by $g(S)$. 
Let $z$ be a singular $2$-cycle of a space $X$. We can write $z$ as a formal
sum $\Si _{i=0}^m \mu _ig_i$ where $\mu _i = \pm 1$ and $g_i$ are
continuous maps of the 2-simplex $\De _2$ to $X$. 
For a 2-cycle $z$, $[z]$ denotes the homology class containing $z$. 

Since $z$ is a 2-cycle, 
by patching boundaries of copies of $\De _2$ we get an oriented closed
surface $S_z$ and a continuous map $f_z: S_z \to X$. 
Let $[S_z]$ be the homology class of the fundamental cycle of $S_z$. 
Then we have $f_{z*}([S_z]) = [z]$. We refer the reader 
to \cite[pp.108-109]{Hatcher:algtop} for this standard construction. 
 
In case $S_z$ is not connected, we have 2-cycles $z_1,\cdots ,z_k$ such that 
$\Si _{i=1}^kz_i=z$, each $S_{z_i}$ is connected, and $S_z$ is the disjoint 
union of $S_{z_i}$'s. An oriented closed surface $S_z$ constructed 
from a 2-cycle $z$ is generally not unique, but the following results hold 
for any construction of $S_z$. 
\begin{lmm}\label{lmm:basic}
Let $z$ be a singular $2$-cycle of $\bigvee _{i=1}^n (T_i,x_i)$. 
Then the cardinality of $\{ i\; | r_{i*}([z]) \neq 0 \}$ is at most $g(S_z)$. 
\end{lmm}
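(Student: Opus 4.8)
The plan is to convert each condition $r_{i*}([z])\neq 0$ into the nonvanishing of a cup product on $S_z$ and then exploit the nondegeneracy of the intersection form. First I would pick the standard generators $\alpha_i,\beta_i\in H^1(T_i;\mathbb{Q})$, so that $\alpha_i\cup\beta_i$ generates $H^2(T_i;\mathbb{Q})\cong\mathbb{Q}$, and define $a_i=f_z^*r_i^*\alpha_i$ and $b_i=f_z^*r_i^*\beta_i$ in $H^1(S_z;\mathbb{Q})$. Since $(r_i\circ f_z)_*[S_z]=r_{i*}[z]$, naturality of the cup product and of the Kronecker pairing give
\[
\langle a_i\cup b_i,[S_z]\rangle=\langle \alpha_i\cup\beta_i,\,r_{i*}[z]\rangle,
\]
which under $H_2(T_i;\mathbb{Q})\cong\mathbb{Q}$ is exactly the rational coefficient of $r_{i*}[z]$. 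As $H_2(T_i;\mathbb{Z})\cong\mathbb{Z}$ is torsion free, $r_{i*}[z]\neq 0$ remains nonzero over $\mathbb{Q}$, so $\langle a_i\cup b_i,[S_z]\rangle\neq 0$ exactly for the indices in $I:=\{\,i\mid r_{i*}[z]\neq 0\,\}$.

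Next I would establish that the mixed cup products vanish. For a finite wedge the restrictions $q_k^*$ to the summands embed $\tilde H^2(\bigvee_{i=1}^n T_i;\mathbb{Q})$ into $\bigoplus_k H^2(T_k;\mathbb{Q})$, and for $i\neq j$ the class $r_i^*\alpha_i\cup r_j^*\alpha_j$ restricts to $0$ on every $T_k$ (because $r_i\circ q_k$ or $r_j\circ q_k$ is constant); hence $r_i^*\alpha_i\cup r_j^*\alpha_j=0$, and likewise for the other mixed products. Pulling back along $f_z$ yields $a_i\cup a_j=a_i\cup b_j=b_i\cup b_j=0$ in $H^2(S_z;\mathbb{Q})$ whenever $i\neq j$, while $a_i\cup a_i=b_i\cup b_i=0$ by graded commutativity over $\mathbb{Q}$. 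Consequently, with respect to the skew pairing $(x,y)\mapsto\langle x\cup y,[S_z]\rangle$, the Gram matrix of the family $\{\,a_i,b_i\mid i\in I\,\}$ is block diagonal, one $2\times 2$ block per $i\in I$ with off-diagonal entry $c_i=\langle a_i\cup b_i,[S_z]\rangle\neq 0$, so its determinant $\prod_{i\in I}c_i^2$ is nonzero.

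Finally I would conclude by counting dimensions. A nonsingular Gram matrix forces the $2|I|$ classes $\{\,a_i,b_i\mid i\in I\,\}$ to be linearly independent in $H^1(S_z;\mathbb{Q})$. By Poincar\'e duality the pairing is nondegenerate and $\dim_{\mathbb{Q}}H^1(S_z;\mathbb{Q})=2g(S_z)$, where for a disconnected $S_z$ we read $g(S_z)$ as the sum of the genera of its components, so that this formula stays additive (equivalently, one may split $z=\sum_j z_j$ along the components and bound $I$ by the union of the corresponding index sets). Linear independence then gives $2|I|\le 2g(S_z)$, i.e. $|I|\le g(S_z)$, which is the assertion.

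The step needing the most care is the vanishing of the mixed cup products $a_i\cup b_j$ for $i\neq j$: it is exactly here that the wedge structure of $\bigvee_{i=1}^n T_i$ enters, through the fact that products of positive-degree classes supported on different summands vanish, combined with the naturality $f_z^*(u\cup v)=f_z^*u\cup f_z^*v$. A secondary point to handle cleanly is the genus bookkeeping for a disconnected $S_z$; otherwise the argument is just the standard nondegeneracy of the intersection form on a closed oriented surface together with the elementary fact that a family of vectors with nonsingular Gram matrix is linearly independent.
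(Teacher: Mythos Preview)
Your proof is correct and takes a genuinely different route from the paper's. The paper argues by induction on $g(S_z)$: after reducing to the connected case, it makes $f_z$ piecewise linear, locates an essential simple closed curve inside $f_z^{-1}(\{o\})$ (arguing by contradiction that if none existed then the image would lie in a single $T_i$), cuts $S_z$ along this curve and caps off, thereby lowering the genus, and concludes by the inductive hypothesis in both the connected and disconnected post-surgery cases. Your argument replaces this geometric surgery with linear algebra over $\mathbb{Q}$: pulling back the standard $H^1$-generators of each $T_i$ through $r_i$ and $f_z$, using the wedge structure to kill mixed cup products, and then reading off $2|I|\le \dim_{\mathbb{Q}}H^1(S_z;\mathbb{Q})=2g(S_z)$ from the nondegeneracy of the intersection form. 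The payoff of your approach is brevity and transparency: it avoids the somewhat delicate search for an essential curve in $f_z^{-1}(\{o\})$ and the case analysis after cutting. The paper's approach, on the other hand, is entirely geometric, needs no cohomological machinery, and yields as a byproduct the structural fact that $f_z$ can be successively surgered until each piece maps into a single torus---information that is reused later in the proof of Theorem~1.5(3).
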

\begin{proof}
We prove this by induction on the genus $g(S_z)$. By the preceding remark we may suppose that $S_z$ is connected. 
When $g(S_z) = 0$, $S_z$ is a 2-sphere and so $r_i\circ f_z$ is
 null-homotopic for each $i$ hence the conclusion is obvious. 
To prove the induction step by contradiction, suppose to the contrary. 
Without loss of
 generality we may assume that $g(S_z)\ge 1$ and  
$r_{i*}(z) \neq 0$ for $1\le i\le g(S_z)+1$. 
We may also assume that $f_z$ is a piecewise linear map and 
 $f_z^{-1}(\{ o\})$ is a surface with boundary,
 by thickening if necessary.  

We claim that we have a simple closed curve in $f_z^{-1}(\{ o\})$ which is
 essential on $S_z$.  
To show this by contradiction, suppose that $f_z^{-1}(\{ o\})$ does not
 contain any closed curve which is essential on $S_z$. Let $C$ be a connected
 component of
 $f_z^{-1}(\{ o\})$. We have at most finitely many disjoint connected
 boundaries of $C$ which are simple closed curves.  
Since these are inessential, at least one side of this simple closed
 curve on $S_z$ is a disk. If this disk contains $C$ for at least
 two simple closed curves in the boundary of $C$, $S_z$ is a 2-sphere, 
which contradicts $g(S_z)\ge 1$. On the other hand, if 
 this side does not contain $C$ for every simple closed
 curve in the boundary of $C$, $f_z$ is null-homotopic 
in this case, because $\pi
 _2(T_i)$ is trivial for each $i$ and consequently 
$\pi _2(\bigvee _{i=1}^nT_i)$ is trivial. Hence, for exactly one 
 simple closed curve in the boundary of $C$, 
the disk side of this simple closed curve contains $C$. 
Let $D_C$ be this disk in $S_z$ and define $f_C:S_z\to \bigvee
_{i=1}^n (T_i,x_i)$ by  
$f_C(x)=o$ for $x\in D_C$ and $f_C(x) = f_z(x)$ for $x\notin D_C$. 
Then $f_C$ is homotopic to $f_z$ relative to $S_z\sm D_C$. 
The simple closed curves of boundaries of $D_{C_0}$ and $D_{C_1}$ 
 are disjoint 
for distinct components $C_0$ and $C_1$ of $f_z^{-1}(\{ o\})$.
Therefore, if $D_{C_0}$ and $D_{C_1}$ intersect, then 
$D_{C_0}\subseteq D_{C_1}$, $D_{C_1}\subseteq D_{C_0}$ or 
$D_{C_0}\cup D_{C_1} = S_z$. The last case implies that $S_z$ is a 2-sphere, 
which contradicts $g(S_z)\ge 1$. 
Hence we have connected components $C_j (1\le j\le m)$ of
 $f^{-1}(\{ o\})$ such that $D_{C_j}\cap D_{C_{j'}}=\emptyset$ for
 $j\neq j'$ and for each connected component $E$ of $f_z^{-1}(\{ o\})$
 there exists $C_j$ such that $D_E\subseteq D_{C_j}$. 
Now define $\ov{f}: S_z\to \bigvee _{i=1}^n
 (T_i,x_i)$ by $f(x) = o$ for $x\in \bigcup _{j=1}^m D_{C_j}$ and
 $\ov{f}(x)= f_z(x)$ otherwise. 
Then $\ov{f}$ is homotopic to $f_z$. 
Since $D_{C_j}$ are pairwise disjoint disks, 
$S_z \sm \bigcup _{j=1}^m D_{C_j}$ is connected, which implies
that the range of $\ov{f}$ is contained in some $T_i$. 
This contradicts the fact that 
$(r_i\circ f_z)_*$  is essential for $1\le i\le k+1$ with $k\ge 1$. 
 We have shown that there exists an essential closed curve $L$ in
 $f_z^{-1}(\{ o\})$ (we may suppose that this curve is piecewise linear).

We cut open $S_z$ along $L$ and paste two disks. We have a cycle $z'$ 
such that $z'$ is homologous to $z$ and $f_{z'}$ extends $f_z$ so that 
$f_{z'}$ takes the value $o$ on these disks. 
 
{\it Case} 1: $S_{z'}$ is connected.   

Since $g(S_{z'}) = g(S_z)-1$, the cardinality of $\{ i \, :\,
 r_{i*}(z') \neq 0 \}$ 
is at most $g(S_z)-1$. Since $z$ is homologous to $z'$, 
we have a contradiction.

{\it Case} 2: $S_{z'}$ has two connected components. 

We have two cycles $z_0$ and $z_1$ such that $z_0 + z_1$ is homologous to 
$z'$, $S_{z_0}$ and $S_{z_1}$ are connected closed surfaces. Then we have 
$g(S_{z_0}) + g(S_{z_1}) = g(S_z)$. Since $L$ is essential, $g(S_{z_0}), g(S_{z_1})\ge 1$ and consequently 
$g(S_{z_0}),g(S_{z_1})<g(S_z)$. Now there exists $i_0$ such that the both $r_{i_0*}([z_0]) = 0$ and $r_{i_0*}([z_1]) = 0$, which contradicts 
 $r_{i_0*}([z_0]) + r_{i_0*}([z_1]) = r_{i_0*}([z]) \neq 0$.  
 \end{proof}
\begin{cor}\label{cor:basic}
Let $z$ be a singular $2$-cycle of $\widetilde{\bigvee} _{i\in \mathbb{N}} (T_i,x_i)$. 
Then the cardinality of $\{ i\; | r_{i*}([z]) \neq 0 \}$ is at most $g(S_z)$. 
\end{cor}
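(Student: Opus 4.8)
The plan is to reduce the statement to the finite case already settled in Lemma~\ref{lmm:basic} by means of the canonical retractions of $\widetilde{\bigvee}_{i\in\mathbb{N}}(T_i,x_i)$ onto its finite sub-wedges. For each $n\in\mathbb{N}$, let $q_n:\widetilde{\bigvee}_{i\in\mathbb{N}}(T_i,x_i)\to \bigvee_{i=1}^n(T_i,x_i)$ be the map that is the identity on $T_1\cup\cdots\cup T_n$ and collapses $\bigcup_{i>n}T_i$ to the wedge point. The continuity of $q_n$ is exactly where the topology of $\widetilde{\bigvee}$ enters: since every neighborhood of the wedge point contains almost all $T_i$, the preimage under $q_n$ of a neighborhood of the wedge point is again such a neighborhood, and elsewhere $q_n$ is locally the identity. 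First I would record this, together with the evident factorization $r_i = r_i^{(n)}\circ q_n$ for every $i\le n$, where $r_i^{(n)}:\bigvee_{i=1}^n(T_i,x_i)\to T_i$ denotes the finite retraction from Lemma~\ref{lmm:basic}.

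Next I would transport the cycle. Writing $z=\sum_j \mu_j g_j$, the push-forward chain $q_{n\#}z=\sum_j \mu_j (q_n\circ g_j)$ is a $2$-cycle of $\bigvee_{i=1}^n(T_i,x_i)$, and since post-composition with $q_n$ leaves the boundary identifications among the simplices unchanged, the surface patched from $q_{n\#}z$ may be taken to be the very same $S_z$, with $f_{q_{n\#}z}=q_n\circ f_z$. In particular $g(S_{q_{n\#}z})=g(S_z)$. Applying Lemma~\ref{lmm:basic} to $q_{n\#}z$ therefore bounds the number of indices $i\le n$ with $r^{(n)}_{i*}([q_{n\#}z])\neq 0$ by $g(S_z)$. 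Because $r_{i*}([z])=r^{(n)}_{i*}(q_{n*}[z])=r^{(n)}_{i*}([q_{n\#}z])$ for $i\le n$, this says exactly that $\{\,i\le n\mid r_{i*}([z])\neq 0\,\}$ has cardinality at most $g(S_z)$.

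Finally, I would let $n$ tend to infinity. The bound $g(S_z)$ is independent of $n$, and $\{\,i\mid r_{i*}([z])\neq 0\,\}$ is the increasing union over $n$ of the sets $\{\,i\le n\mid r_{i*}([z])\neq 0\,\}$; hence if the whole set had more than $g(S_z)$ elements, some finite truncation already would, contradicting the previous paragraph. This yields the desired cardinality bound. I expect no serious obstacle here: the only points requiring genuine care are the claim that the surface and its genus are preserved under $q_{n\#}$, so that the bound stays $g(S_z)$ rather than deteriorating with $n$, and the verification that $q_n$ is continuous for the specific weak-wedge topology. Everything else is a routine passage from the finite to the countable wedge.
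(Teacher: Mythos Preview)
Your proof is correct and follows essentially the same approach as the paper: retract onto a finite sub-wedge, observe that the patched surface (and hence its genus) is unchanged under the chain pushforward, and invoke Lemma~\ref{lmm:basic}. The paper phrases it as a contradiction---choosing a finite set $F$ of size exceeding $g(S_z)$ with $r_{i*}([z])\neq 0$ for all $i\in F$ and retracting onto $\bigvee_{i\in F}T_i$---whereas you take initial segments $\{1,\dots,n\}$ and let $n\to\infty$, but this is a purely cosmetic difference.
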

\begin{proof}
To show this by contradiction, suppose that 
the cardinality of $\{ i\; | r_{i*}([z]) \neq 0 \}$ is greater than $g(S_z)$. 
We have $F\subseteq \mathbb{N}$ such that 
the cardinality of $F$ is greater than $g(S_z)$ and 
 $r_{i*}([z]) \neq 0$ for every $i\in F$.
Let $r_F: \widetilde{\bigvee} _{i\in \mathbb{N}} (T_i,x_i)\to 
\bigvee _{i\in F} (T_i,x_i)$ 
be the retraction projecting every torus $T_j$ with $j \notin F$ to $o$. 
We remark that we can construct $S_{r_{F\#}(z)}$ for $r_{F\#}(z)$ 
with $S_{r_{F\#}(z)} = S_z$. 
Since $r_{i*}([r_{F\#}(z)]) = r_{i*}([z])$ for $i\in F$, 
the cardinality $\{ i\in F: r_{i*}([r_{F\#}(z)]) \neq 0\}$ is greater 
than $g(S_{r_{F\#}(z)})$ which contradicts Lemma~\ref{lmm:basic}.
\end{proof}
{\it Proof of\/} Theorem~\ref{thm:main4}.
First we recall the following Mayer-Vietoris sequence for $SC(T)$:  
\[
 H_2(p_2^{-1}((0,1)))
\stackrel{i_{0*}-i_{1*}}
\longrightarrow 
  H_2(p_2^{-1}([0,1)))\oplus H_2(p_2^{-1}((0,1])) 
\stackrel{j_{0*} + j_{1*}}
\longrightarrow  H_2(SC(T)), 
\]
where $i_0:p_2^{-1}((0,1)) \to p_2^{-1}([0,1))$, 
$i_1:p_2^{-1}((0,1)) \to p_2^{-1}((0,1])$, 
$j_0: p_2^{-1}([0,1))\to SC(T)$ and $j_1: p_2^{-1}((0,1])\to
SC(T)$ are the inclusions. 
We have deformation retractions 
$r_0:p_2^{-1}([0,1))\to M_0$, $r_1:p_2^{-1}((0,1])\to M_1$ and 
$r_{1/2}:p_2^{-1}((0,1))\to M_{1/2}$.  We observe that 
$M_0, M_1$ and $M_{1/2}$ are homotopy equivalent to the Hawaiian 
tori wedge, more precisely, by the local contractiblity of tori we 
have the homotopy equivalences 
\begin{eqnarray*}
M_0 &\simeq& \widetilde{\bigvee}_{n\in\mathbb{N}}p^{-1}(\{ A_n\}) \\
M_1 &\simeq& \widetilde{\bigvee}_{n\in\mathbb{N}}p^{-1}(\{ B_n\}) \\
M_{1/2} &\simeq& \widetilde{\bigvee}_{n\in\mathbb{N}}p^{-1}(\{ C_n\}) 
\end{eqnarray*}
Let $c_{A_k}:M_0\to p^{-1}(\{ A_k\})$ be the retraction so that 
$c_{A_k}(M_0\sm p^{-1}(\{ A_k\})) = \{ A_k\}$ and let 
$c_{B_k}:M_1\to p^{-1}(\{ B_k\})$ and $c_{C_k}:M_{1/2}\to p^{-1}(\{ C_k\})$
be similar retractions (see Figure 1). 
Finally, let $r_{A_k} = c_{A_k}\circ r_0, 
r_{B_k} =c_{B_k}\circ r_1$ and $r_{C_k} =c_{C_k}\circ r_{1/2}$. 

Since $p^{-1}(\{ C_1\})$ is homeomorphic to $T$, we 
let $z$ be the standard cycle such that $[z]$ is the generator of
$H_2(p^{-1}(\{ C_1\})$. 
We'll show that $[z]$ is nontrivial as an element of $H_2(SC(T))$. 
To show this by contradiction we suppose that $[z] = 0$ in $H_2(SC(T))$. 
Since $z$ is a cycle in $p_2^{-1}([0,1))$, we have $[z]_0\in
H_2(p_2^{-1}([0,1))$, where $[ * ]_0$ denotes a homology class in 
$H_2(p_2^{-1}([0,1))$. 
Since $j_{0*}([z]_0) + j_{1*}(0) = [z] =0$, we have 
$u\in H_2(p_2^{-1}((0,1)))$ such that  
$i_{0*}(u) = [z]_0$ and $-i_{1*}(u) = 0$.  

Then we have $r_{A_1*}\circ i_{0*}(u) =  r_{A_1*}([z]_0) = 1$ and 
$r_{A_k*}\circ i_{0*}(u) =  r_{A_k*}([z]_0) = 0$ for $k\ge 2$ and 
$r_{B_k*}\circ i_{1*}(u) = -r_{B_k*}(0) = 0$ for $k\in \mathbb{N}$. 
Consider the above homotopy equivalences of $M_0$ and $M_{1/2}$ together 
with the deformation retractions $r_0$ and $r_{1/2}$. 
Then we can see that $H_2(p^{-1}(\{ C_{2k-2}\}))\oplus 
H_2(p^{-1}(\{ C_{2k-1}\}))$ is a summand of $H_2(p^{-1}((0,1)))$ and 
$H_2(p^{-1}(\{ A_k\}))$ is a summand of $H_2(p^{-1}([0,1)))$. 
Therefore $i_{0*}|H_2(p^{-1}(\{ C_{2k-2}\}))$ and 
$i_{0*}|H_2(p^{-1}(\{ C_{2k-1}\}))$ are isomorphisms 
onto $H_2(p^{-1}(\{ A_k\}))$ for $k\ge 2$ and 
similarly $i_{1*}|H_2(p^{-1}(\{ C_{2k-1}\}))$ and 
$i_{1*}|H_2(p^{-1}(\{ C_{2k}\}))$ are isomorphisms 
onto $H_2(p^{-1}(\{ B_k\}))$ for $k\in \mathbb{N}$ and hence  
we have $r_{C_1*}(u) = 1$ and $r_{C_2*}(u) = -1$ and successively 
$r_{C_{2k+1}*}(u) = 1$ and 
$r_{C_{2k+2}*}(u) = -1$ for $k\in \mathbb{N}$. 
Since $u\in H_2(p_2^{-1}((0,1)))$, we have a singular $2$-cycle $z_0$ of 
$p_2^{-1}((0,1))$ with $[z_0] = u$, where $p_2^{-1}((0,1))$ is homotopy 
equivalent to
$\widetilde{\bigvee}_{i\in \mathbb{N}}(T_i,x_i)$. According to the
description preceding Lemma~\ref{lmm:basic} we have a closed surface
$S_{z_0}$. 
By Corollary~\ref{cor:basic} 
the cardinality of $\{ i \, | \, r_{C_i*}((u))\neq 0\}$ is 
at most $g(S_{z_0})$, which is a contradiction. 

Next we consider the following property of a space:
\begin{quote}
{\it There exists a point $x$ such that any neighborhood of $x$ contains 
an image of a nontrivial $2$-cycle.}
\end{quote}
This property is homotopy invariant. 

Since $p_2^{-1}([0,1))$ and $p_2^{-1}((0,1])$ are contractible by 
Lemma~\ref{lmm:contractible}, 
$CAC(T)$ does not have this property. But for $CSC(T)$ any neighborhood of
a point $(0;y)$ contains an image of a $2$-cycle which is homologous to the
standard $2$-cycle $z$, which is nontrivial. 
We have thus shown that $CSC(T)$ and $CAC(T)$ are not homotopy
equivalent. 
\qed

\begin{rmk}
{\rm For the proof of Theorem~\ref{thm:main4} a weaker assertion than 
Lemma~\ref{lmm:basic} is sufficient.
What is actually necessary is a bound on the cardinality of
$\{ i\; | r_{i*}([z]) \neq 0 \}$ and this bound can be obtained by a
surgery, i.e. by cutting open some $f^{-1}(T_i)$ instead of $f^{-1}(\{ o\})$ 
in the proof of Lemma~\ref{lmm:basic}, which is easier. 
Since Lemma~\ref{lmm:basic} itself is a basic fact, we have stated this
exact form.} 
\end{rmk}
\section{Proof of Theorem~\ref{thm:main5}}

{\it Proof of\/} Theorem~\ref{thm:main5}.
The statement (1) of Theorem~\ref{thm:main5} is a corollary of
\cite[Theorem A.1]{E:free} and so we explain the notions and results
around it. The free $\si$-product of groups, which is a subgroup of the
inverse limit of finite free products of groups \cite{Higman:unrestrict},
is defined by using countably infinite words as a generalization of the
usual free products \cite[Definition 1.2]{E:free}. We refer the reader
to \cite[Definition 1.2]{E:free} for a precise definition. Then for
locally strongly contractible spaces $X_n$, 
$\pi _1(\widetilde{\bigvee}_{n\in\mathbb{N}}(X_n,x_n))$ is isomorphic
to the free $\si$-product of $\pi _1(X_n,x_n)$ by \cite[Theorem
A.1]{E:free}. Thus we have shown (1). 

Next we show (2). Let $o$ be the 
distinguished point of the Hawaiian tori wedge $\mathbb{H}_T$ and $T_n$
be the $n$-th factor of $\mathbb{H}_T$.  
Let $f:\mathbb{I}^2\to \mathbb{H}_T$ be a continuous map
such that $f(\partial \mathbb{I}^2) = \{ o\}$. 
We have the countable family $\mathcal{U}$ consisting of pairwise
disjoint connected open sets such that
$\mathbb{I}^2 \sm f^{-1}(\{ o\}) = \bigcup \mathcal{U}$. 
For each $U\in \mathcal{U}$, $f(U)$ is contained in some $T_n$.  
Since each torus $T_n$ is locally strongly contractible at $o$ and a
neighborhood of $o$ contains almost all $T_n$, we may assume that
$\{ U\in \mathcal{U}: f(U)\subseteq T_n\}$ is finite for each $T_n$ and
also that each $U\in \mathcal{U}$ is an open polygon. 
We remark that $U$ may have
holes and it divides $\mathbb{I}^2$ into finitely many, possibly only one,
connected components. For each $U\in \mathcal{U}$, let $D_U$ be the open
disk such that $\mathbb{I}^2\sm D_U$ is the unique connected component of 
 $\mathbb{I}^2\sm U$ containing $\partial \mathbb{I}^2$. Then we have
 $U\subseteq D_U$. 
For $U,V\in \mathcal{U}$, we define $U\prec V$, if  
$U$ is contained in $D_V$ but $V$ is not contained in $D_U$. 

Since $\prec$ is a partial order and $\mathcal{U}$ is at most countable, 
we have an order-preserving map $\rho : \mathcal{U}\to \mathbb{Q}$. 
The complement of the Cantor ternary set consists of a disjoint union of 
open intervals and the disjoint intervals are ordered naturally 
and this ordering is isomorphic to that of the rationals $\mathbb{Q}$. 
We number the open intervals as $(a_q,b_q)$ by $q\in \mathbb{Q}$ so that 
$p<q$ implies $b_p<a_q$. 
For $U\in \mathcal{U}$ we define $f_U: \ov{D_U}\to \mathbb{H}_T$ such that 
$f_U(x) = f(x)$ for $x\in U$ and $f_U(x) = o$ for $x\in \ov{D_U}\sm U$. 

We define a homotopy $H: \mathbb{I}^2\times \mathbb{I}$ so that $H(x,0) = f(x)$
for $x\in \mathbb{I}^2$, 
$H(x,t) = o$ for $x\in \partial \mathbb{I}^2, t\in \mathbb{I}$ and
$H(x,1) = o$ for $x\in \mathbb{I}^2$. 
For this purpose we define $H$ first on some parts.  

For each $U\in \mathcal{U}$, we define 
$H(x,t) = f(x)$ for $x\in U$ and $t\le a_{\rho (U)}$ 
and let $H\, |\, \ov{D_U}\times [a_{\rho (U)},b_{\rho (U)}]$ to be a homotopy from
$f_U$ to the constant map to $\{ o\}$. 
Now, $H$ is continuous on $\ov{D_U}\times [a_{\rho (U)},b_{\rho (U)}]\cup 
\ov{U}\times [0,a_{\rho (U)}]$ for each $U\in \mathcal{U}$. 

According to the definitions of $\rho$ and $D_U$ 
the defined parts are overlapping only where the value of $H$ is $o$. 
To see this, suppose that $D(U)\cap D(V)\neq \emptyset$ 
for distinct $U$ and $V$. Then we have $D(U)\cap V\neq \emptyset$ or 
$D(V)\cap U\neq \emptyset$.
Since $U$ and $V$ are disjoint and each of them is connected, we 
have $V\subseteq D(U)$ or $U\subseteq D(V)$ and hence $V\prec U$ 
or $U\prec V$. We only deal with the case $V\prec U$. 
Since $U\cap V = \emptyset$ and $b_{\rho (V)} < a_{\rho (U)}$, 
we have 
\[
 (D_U\times [a_{\rho (U)},b_{\rho (U)}]\cup 
U\times [0,a_{\rho (U)}])\cap
 (D_V\times [a_{\rho (V)},b_{\rho (V)}]\cup 
V\times [0,a_{\rho (V)}]) = \emptyset
\]
and we conclude that the defined parts are overlapping only 
where the value of $H$ is $o$. 
 
We define $H(x,t)=o$ for the remaining undefined parts. 
We need to show the continuity of $H$. A crucial matter is 
the accumulation of $H^{-1}(T_n)$ for infinitely many $n$, but we define the 
value of $H$ to be $o$ for such a point. Since each neighborhood of $o$ 
contains almost all $T_n$ and $\{ U\in \mathcal{U}: f(U)\subseteq T_n\}$ is 
finite for each $T_n$, the continuity of $H$ is now clear. 
Since $H(x,1)$ is defined as the 
remaining undefined parts, $H$ is a homotopy to the constant map to $\{ o\}$, 
which implies that $f$ is null-homotopic.  

In the remaining part we shall prove (3). 
Let $f$ be a continuous map from a closed surface $S_z$ to $\mathbb{H}_T$.  
Let $r_i$ be the retraction of $\mathbb{H}_T$ to the $i$-th factor. 
By Corollary~\ref{cor:basic} $r_i\circ f$ is null-homologous for almost all
$i$. 
As in the proof of (2) we may suppose that each connected component of 
$f^{-1}(\mathbb{H}_T\sm \{ o\})$ is an open subsurface. 
Next we find a simple closed curve in $f^{-1}(\{ o\})$ which 
is essential in the surface. 
We cut open along the simple closed curve. We now iterate this procedure. 
  
Hence we have $g_0,\cdots ,g_n$ such that 
\begin{itemize}
\item[(a)] the domain $S_{z_i}$ of each $g_i$ is a connected closed surface; 
\item[(b)] the singular cycle $z_1+\cdots +z_n$ is homologous to $z$; 
\item[(c)] every simple closed curve in $g^{-1}_i(\{ o\})$ 
is null-homotopic in the surface; and  
\item[(d)] each connected component of $g^{-1}_i(\mathbb{H}_T\sm \{ o\})$ 
is an open subsurface. 
\end{itemize}
We fix $g_i$. Let $U$ be a connected component of 
$g^{-1}_i(\mathbb{H}_T\sm \{ o\})$. 
Then by the property (c) every simple closed curve in the boundary of 
$U$ bounds a disk. Since the genus of $S_z$ is positive, at most one of 
those closed disks contains $U$. If there is no such disk, the complement 
of $U$ of the surface is covered by those finitely many disks. Then by Theorem
~\ref{thm:main5}(2) the restriction of $g_i$ to each disk is homotopic to 
the constant $o$ and hence we conclude that $g_i$ is homotopic to 
a map into a single $T_j$ for $j\in \mathbb{N}$. 
Otherwise, for each $U\in\mathcal{U}$ there exists an open disk $D_U$ 
which contains $U$ and whose boundary is a connected component 
of the boundary of $U$. 
Now we define $U\prec V$ if $U\subseteq D_V$, but $V\not\subseteq D_U$   
in a similar way as in the proof of (2). 

If $D_U$ contains $V$ and $D_V$ contains $U$, 
then we have $U=V$. To see this by contradiction, suppose that $U\neq V$. 
Then, since $U\cap V =\emptyset$ and $\partial D_V$ is a connected component 
of $\partial V$, we have $S_{z_i}\sm D_U\subseteq D_V$ and this implies that 
$S_{z_i}$ is a sphere, which is a contradiction. 
Hence $U\prec V$ is a partial order as in the proof of (2). 
In addition, since $S_{z_i}$ is a surface, for distinct $U$ and $V$, 
$D(U)\cap D(V) \neq \emptyset$ implies $D(U)\cap V \neq \emptyset$ or 
$D(V)\cap U \neq \emptyset$ as in (2) and consequently 
$V\prec U$ or $U\prec V$. Now we continue as in the 
proof of (2). The difference is that the domain is a closed 
surface $S_{z_i}$ instead of a square, but the proof is formally the same 
and we have a homotopy from $g_i$ to the constant map to $\{ o\}$. 
Now all of the above implies that $[z] \in 
\oplus _{i\in \mathbb{N}}H_2(T_i) 
\cong \oplus _\mathbb{N}\mathbb{Z}$. 
Since $\oplus _{i\in \mathbb{N}}H_2(T_i)\le H_2(\mathbb{H}_T)$, 
we have 
\[
 H_2(\mathbb{H}_T) = \oplus _{i\in \mathbb{N}}H_2(T_i)\cong  
\oplus _\mathbb{N}\mathbb{Z}. \qquad \qed
\] 
We complete this paper with the following three interesting problems which 
remain open:
\begin{prb} Is the group $H_3(\mathbb{H}_T)$ trivial? 
\end{prb}

\begin{prb} Does there exist a 2-dimensional Peano
continuum which is cell-like, simply connected, noncontractible, and
aspherical in dimension 2?
\end{prb}
\begin{prb} Does there exist a finite-dimensional noncontractible Peano
continuum whose homotopy groups are trivial?
\end{prb}

\section{Acknowledgements}
This research was supported by the Slovenian Research Agency grants
P1-0292-0101-04, J1-4144-0101-07, J1-2057-0101 
and the Polish-Slovenian grant BI-Pl-2010-2011-001. 
The first author was partially supported by the
Grant-in-Aid for Scientific research (C) of Japan
No. 20540097 and 23540110.

\providecommand{\bysame}{\leavevmode\hbox to3em{\hrulefill}\thinspace}

\end{document}